\newcommand{\Q}{\mathbb{Q}}
\newcommand{\C}{\mathbb{C}}
\newcommand{\N}{\mathbb{N}}
\newcommand{\cat}{^\frown}
\newcommand{\dom}{\operatorname{dom}}
\newcommand{\ran}{\operatorname{ran}}
\newcommand{\norm}[1]{\left\| #1 \right\|}
\newcommand{\supp}{\operatorname{supp}}
\newcommand{\B}{\mathcal{B}}
\newcommand{\zerovec}{\mathbf{0}}
\newcommand{\Fin}{\operatorname{Fin}}
\theoremstyle{theorem}
\newtheorem{theorem}{Theorem}[section]
\newtheorem{lemma}[theorem]{Lemma}
\theoremstyle{definition}
\newtheorem{definition}[theorem]{Definition}
\newtheorem{question}[theorem]{Question}
\theoremstyle{theorem}
\newtheorem{corollary}[theorem]{Corollary}
\theoremstyle{theorem}
\newtheorem{proposition}[theorem]{Proposition}
\theoremstyle{theorem}
\theoremstyle{theorem}
\theoremstyle{definition}
\theoremstyle{theorem}
\numberwithin{equation}{section}
\begin{document}
\title{Analytic computable structure theory and $L^p$-spaces part 2}
\author{Tyler A. Brown}
\address{Department of Mathematics\\
Iowa State University\\
Ames, Iowa 50011}
\email{tab5357@iastate.edu}
\author{Timothy H. McNicholl}
\address{Department of Mathematics\\
Iowa State University\\
Ames, Iowa 50011}
\email{mcnichol@iastate.edu}

\begin{abstract}
Suppose $p \geq 1$ is a computable real.  
We extend previous work of Clanin, Stull, and McNicholl by determining the degrees of categoricity of the separable $L^p$ spaces whose underlying measure spaces are atomic but not purely atomic.  In addition, we ascertain the complexity of associated projection maps.
\end{abstract}
\thanks{The second author was supported in by Simons Foundation Grant \# 317870.}
\maketitle

\section{Introduction}\label{sec:intro}

We continue here the program, recently initiated by Melnikov and Nies (see \cite{Melnikov.Nies.2013}, \cite{Melnikov.2013}), of utilizing the tools of computable analysis to investigate the effective structure theory of metric structures, in particular $L^p$ spaces where $p \geq 1$ is computable.  Specifically, we seek to classify the $L^p$ spaces that are computably categorical in that they have exactly one computable presentation up to computable isometric isomorphism.  We also seek to determine the degrees of categoricity of those $L^p$ spaces that are not computably categorical; this is the least powerful Turing degree that computes an isometric isomorphism between any two computable presentations of the space.

Recall that when $\Omega$ is a measure space, an \emph{atom} of $\Omega$ is a non-null measurable set $A$ so that $\mu(B) = \mu(A)$ for every non-null measurable set $B \subseteq A$.  A measure space with no atoms is \emph{non-atomic}, and a measure space is \emph{purely atomic} if its $\sigma$-algebra is generated by its atoms.  Recall also that with every measure space $\Omega$ there is an associated 
pseudo-metric $D_\Omega$ on its finitely measurable sets.  Namely, $D_\Omega(A,B)$ is the measure of the symmetric difference of $A$ and $B$.  The space $\Omega$ is said to be \emph{separable} if 
this associated pseudo-metric space is separable.  By means of convergence in measure, it is possible to show that an $L^p$ space is separable if and only if its underlying measure space is separable.

Suppose $p \geq 1$ is computable.  It is essentially shown in \cite{Pour-El.Richards.1989} that every separable $L^2$ space is computably categorical.  In \cite{McNicholl.2015}, the second author showed that $\ell^p$ is computably categorical only when $p \neq 2$.  Moreover, in \cite{McNicholl.2017} he showed that 
$\ell^p_n$ is computably categorical and that the degree of categoricity of $\ell^p$ is \textbf{0}''.  
Together, these results determine the degrees of categoricity of separable spaces of the form $L^p(\Omega)$ when $\Omega$ is purely atomic.  In the paper preceding this, Clanin, McNicholl, and Stull showed that $L^p(\Omega)$ is computably categorical when $\Omega$ is separable and nonatomic \cite{Clanin.McNicholl.Stull.2019}.    
Here, we complete the picture by determining the degrees of categoricity of separable $L^p$ spaces whose underlying measure spaces are atomic but not purely atomic.  Specifically, we show the following.

\begin{theorem}\label{thm:main}
Suppose $\Omega$ is a separable measure space that is atomic but not purely atomic, and suppose $p$ is a computable real so that $p \geq 1$ and $p \neq 2$.  Assume $L^p(\Omega)$ is nonzero.
\begin{enumerate}
	\item If $\Omega$ has finitely many atoms, then the degree of categoricity of $L^p(\Omega)$ is $\mathbf{0'}$.  \label{thm:main::itm:finite}
	
	\item If $\Omega$ has infinitely many atoms, then the degree of categoricity of $L^p(\Omega)$ is $\mathbf{0''}$.  \label{thm:main::itm:infinite}
\end{enumerate} 
\end{theorem}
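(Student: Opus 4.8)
The plan is to prove both parts by establishing matching upper and lower bounds on the degree of categoricity. For the upper bounds, I would show that the stated degree ($\mathbf{0'}$ in the finite-atom case, $\mathbf{0''}$ in the infinite-atom case) suffices to compute an isometric isomorphism between any two computable presentations of $L^p(\Omega)$. The key structural fact I expect to use is a decomposition of $L^p(\Omega)$: since $\Omega$ is atomic but not purely atomic, its measure algebra splits into a purely atomic part and a nonatomic part. By Lamperti's theorem, every isometry of an $L^p$ space with $p \neq 2$ is induced by a set-map on the underlying measure algebra (a combination of a measure-preserving-up-to-scalar transformation and a weight function); this rigidity is what forces the atoms to be recognizable and is precisely why the hypothesis $p \neq 2$ appears. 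The first step, then, is to give an effective analysis of how atoms are detected from a computable presentation: an element of $L^p(\Omega)$ is (a multiple of) a normalized atom indicator exactly when its support cannot be split into two pieces of comparable norm, and quantifying this splitting condition is where the arithmetical complexity enters.

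Concretely, for the lower bounds I would exhibit a family of computable presentations coding a $\Sigma^0_1$-complete set (for part 1) or a $\Sigma^0_2$-complete set (for part 2), so that any isometric isomorphism between two such presentations must decode the set, thereby computing $\mathbf{0'}$ or $\mathbf{0''}$ respectively. The natural construction varies the "labels" or relative sizes of the atoms across presentations so that matching them up requires deciding membership in the coded set. For the infinite-atom case the coding is iterated: identifying which basis elements correspond to atoms is itself a $\Sigma^0_2$ question because "being an atom" requires the universal ($\Pi^0_1$) condition that no proper sub-support of smaller norm exists, combined with an existential search — this is analogous to how the degree of categoricity of $\ell^p$ was shown to be $\mathbf{0''}$ in \cite{McNicholl.2017}, and I would adapt that lower-bound machinery here.

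For the upper bound in the finite-atom case, the point is that with only finitely many atoms, the purely atomic part is a copy of $\ell^p_n$ for some $n$ (or a finite sum), which is computably categorical, so the only genuine difficulty is locating the finitely many atoms and computing the isometry on the nonatomic part; locating each atom is a $\Sigma^0_1$ task, giving the $\mathbf{0'}$ bound, while the nonatomic part is handled by the categoricity result of Clanin, McNicholl, and Stull \cite{Clanin.McNicholl.Stull.2019} relativized appropriately. In the infinite-atom case one must additionally enumerate and sort infinitely many atoms by size and match them between the two presentations in a way that interacts correctly with the nonatomic component; the extra quantifier needed to handle the limiting behavior of infinitely many atom-norms pushes the bound up to $\mathbf{0''}$.

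The main obstacle I anticipate is the interface between the atomic and nonatomic parts. An arbitrary isometry need not respect the splitting into atomic and nonatomic summands a priori, so I must first prove that it does — that is, that any isometric isomorphism carries the atomic part onto the atomic part and the nonatomic part onto the nonatomic part. Establishing this \emph{effectively}, with explicit control of the arithmetical complexity, is the crux: it requires an effective version of Lamperti's representation theorem that lets me read off the supporting set-transformation from the presentation and verify that atoms map to atoms within the claimed complexity bound. Getting the complexity of this decomposition to match exactly (rather than overshoot) the lower bound is where I expect the delicate work to lie, and it is also where the distinction between finitely and infinitely many atoms produces the jump from $\mathbf{0'}$ to $\mathbf{0''}$.
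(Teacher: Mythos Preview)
Your plan is correct and matches the paper's overall architecture: reduce via Carath\'eodory to $\ell^p_n \oplus L^p[0,1]$ or $\ell^p \oplus L^p[0,1]$, establish upper bounds by effectively splitting off the atomic summand and invoking the categoricity results of \cite{McNicholl.2017} and \cite{Clanin.McNicholl.Stull.2019}, and establish lower bounds by coding into bespoke presentations. The paper also uses (without naming it) the Lamperti-type fact that isometries preserve the subvector order, so atoms go to atoms and the atomic/nonatomic decomposition is respected.

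The main difference is in the technical machinery. Rather than proving an ``effective Lamperti'' theorem, the paper works throughout with \emph{computable disintegrations} and partitions of their domains into \emph{almost norm-maximizing chains}; the infima along such chains recover exactly the atoms (Theorem~\ref{thm:limitsAreAtoms}). This yields a clean intermediate result you do not isolate explicitly: the projection $P_{\{\zerovec\}\oplus L^p[0,1]}$ is $\emptyset'$-computable in the finite-atom case and $\emptyset''$-computable in the infinite-atom case (Theorem~\ref{thm:proj.Lp.comp}), and both the upper and lower bounds are then routed through this projection. For the lower bounds, the paper's constructions are a bit more specific than your sketch: in the finite case a single left-c.e.\ real of arbitrary c.e.\ degree is hidden in the \emph{boundary} between one atom and the nonatomic part (so that computing the projection onto that atom recovers the real), and in the infinite case one builds an $L^p$-sum of blocks $\mathcal{B}_e$ where $\mathcal{B}_e$ is finite-dimensional iff $e\in\Fin$, so that the projection onto the nonatomic part decides $\Fin$. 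Your ``vary the labels/sizes of atoms'' idea would need to exploit the nonatomic summand in this way, since $\ell^p_n$ alone is computably categorical and cannot carry the coding by itself.
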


Suppose $\Omega$ is a separable measure space that is not purely atomic, and suppose $L^p(\Omega)$ is nonzero where $1 \leq p < \infty$.  It follows from the Carath\'eodory classification of separable measure spaces that $L^p(\Omega)$ isometrically isomorphic to $L^p[0,1]$ if $\Omega$ has no atoms and that $L^p(\Omega)$ is isometrically isomorphic to $\ell^p_n \oplus L^p[0,1]$ if $\Omega$ has $n \geq 1$ atoms (see e.g. \cite{Cembranos.Mendoza.1997}).   It also follows that if 
$\Omega$ has infinitely many atoms, then $L^p(\Omega)$ is isometrically isomorphic to $\ell^p \oplus L^p[0,1]$.  Degrees of categoricity are preserved by isometric isomorphism.  We thus have the following.

\begin{corollary}\label{cor:lpnLp01}
Suppose $p$ is a computable real so that $p \geq 1$ and $p \neq 2$.  Then, the degree of categoricity of $\ell^p_n \oplus L^p[0,1]$ is $\mathbf{0'}$, and the degree of categoricity of $\ell^p \oplus L^p[0,1]$ is $\mathbf{0''}$.  
\end{corollary}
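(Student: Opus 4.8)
The plan is to reduce the corollary directly to Theorem~\ref{thm:main} by realizing each of the two Banach spaces as an $L^p$ space over a concrete separable measure space that is atomic but not purely atomic, and then transporting the degree-of-categoricity computation across an isometric isomorphism. The two ingredients I would invoke are the Carath\'eodory classification recalled above (see \cite{Cembranos.Mendoza.1997}) and the fact that the degree of categoricity is an invariant of isometric isomorphism; granting these, the argument is essentially bookkeeping.

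First I would exhibit the witnessing measure spaces. For $\ell^p_n \oplus L^p[0,1]$, let $\Omega_n$ be the disjoint union of $n$ atoms of finite positive measure with the unit interval under Lebesgue measure. Then $\Omega_n$ is separable, it has exactly $n$ atoms, and it fails to be purely atomic because of its nonatomic part; moreover $L^p(\Omega_n)$ is nonzero. By the Carath\'eodory classification, $L^p(\Omega_n)$ is isometrically isomorphic to $\ell^p_n \oplus L^p[0,1]$. For $\ell^p \oplus L^p[0,1]$, I would replace the finite family of atoms by a countably infinite one, say $\N$ equipped with counting measure, and form the analogous disjoint union $\Omega_\infty$ with $[0,1]$; this is a separable measure space with infinitely many atoms, not purely atomic, whose $L^p$ space is isometrically isomorphic to $\ell^p \oplus L^p[0,1]$.

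It then remains only to read off the degrees. Since $\Omega_n$ has finitely many atoms, Theorem~\ref{thm:main}(\ref{thm:main::itm:finite}) gives that the degree of categoricity of $L^p(\Omega_n)$ is $\mathbf{0'}$, and since $\Omega_\infty$ has infinitely many atoms, Theorem~\ref{thm:main}(\ref{thm:main::itm:infinite}) gives that the degree of categoricity of $L^p(\Omega_\infty)$ is $\mathbf{0''}$; here the hypothesis $p \neq 2$ of the corollary is exactly what the theorem requires. Invariance of the degree of categoricity under isometric isomorphism then transfers these values to $\ell^p_n \oplus L^p[0,1]$ and $\ell^p \oplus L^p[0,1]$ respectively. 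The only point that is not pure bookkeeping is this invariance: a fixed surjective linear isometry $T$ between two isometrically isomorphic Banach spaces induces a bijection between their computable presentations and conjugates every isometric isomorphism of presentations on one side to one on the other, so the least Turing degree computing an isomorphism between an arbitrary pair of presentations is the same for both spaces. I expect this---rather than the routine identification of the measure spaces---to be the step deserving care, though it is standard and is used throughout the surrounding literature.
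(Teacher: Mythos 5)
Your proposal is correct and follows essentially the same route as the paper: the paragraph preceding the corollary derives it from Theorem~\ref{thm:main} via the Carath\'eodory classification of separable measure spaces together with the invariance of degrees of categoricity under isometric isomorphism, exactly as you do. Your only addition is to make the witnessing measure spaces $\Omega_n$ and $\Omega_\infty$ explicit and to spell out why a surjective linear isometry preserves the degree of categoricity, both of which the paper leaves implicit.
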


These results are somewhat surprising in that one might suspect that the spaces $\ell^p_n\oplus L^p[0,1]$ and $\ell^p\oplus L^p[0,1]$ do not have structure much different from their constituent (summand) spaces.  It turns out that allowing these summand spaces to ``work together" indeed produces a few complications in terms of their hybridized structure, particularly while establishing lower bounds for degrees of computable categoricity.  For example, in Section \ref{sec:lower}, we construct a computable presentation of $\ell^p_n\oplus L^p[0,1]$ so that projections of vectors into each of the summand spaces are incomputable.  However, as we will demonstrate in Section \ref{sec:upper}, it can be fruitful to dissect the hybrid case and consider the constituent spaces in tandem.  In fact, our results regarding the upper bounds for the degree of categoricity of $\ell^p_n\oplus L^p[0,1]$ and $\ell^p\oplus L^p[0,1]$ can be considered in this manner and largely piggyback on the results in \cite{Clanin.McNicholl.Stull.2019} and \cite{McNicholl.2017}.

Another consequence of these findings is that when $p \geq 1$ is computable, every separable $L^p$ space is $\mathbf{0''}$-categorical.

The paper is organized as follows.  Background and preliminaries are covered in Sections \ref{sec:back} and \ref{sec:prelim}.  In Section \ref{sec:projection}, we present results on the complexity of the natural projection maps for spaces of the form $\ell^p_n \oplus L^p[0,1]$ or $\ell^p \oplus L^p[0,1]$.  We derive lower bounds on degrees of categoricity in Section \ref{sec:lower} and corresponding upper bounds in Section \ref{sec:upper}.  These proofs utilize our results on projection maps in Section \ref{sec:projection}.  Finally in Section \ref{sec:conclusion} we summarize our findings and pose questions for further investigation.

\section{Background}\label{sec:back}
Here, we cover pertinent notions regarding external and internal direct sums of Banach spaces and the notion of complemented subspaces of an internal direct sum of Banach spaces.  We also summarize additional background material from \cite{Clanin.McNicholl.Stull.2019}.  We will assume our field of scalars consists of the complex numbers, but all results hold for the field of real numbers as well.

When $S \subseteq \N^*$, let $S\downarrow$ denote the downset of $S$; i.e. 
the set of all $\nu \in \N^*$ so that $\nu \subseteq \mu$ for some $\mu \in S$.  

Suppose $1 \leq p < \infty$.  If $\mathcal{B}_0$, $\ldots$, $\mathcal{B}_n$ are Banach spaces, their \emph{$L^p$-sum} consists of the vector space $\mathcal{B}_0 \times \ldots \times \mathcal{B}_n$ together with the norm
\[
\norm{(u_0, \ldots, u_n)}_p = \left( \sum_{j = 0}^n \norm{u_j}_{\mathcal{B}_j}^p \right)^{1/p}.
\]
Thus, the external direct sum of two $L^p$ spaces is their $L^p$-sum.
If $\mathcal{B}_j$ is a Banach space for each $j \in \N$, then 
the $L^p$-sum of $\{\mathcal{B}_j\}_{j \in \N}$ consists of all $f \in \prod_j B_j$ so that 
$\sum_j \norm{f(j)}_{\mathcal{B}_j}^p < \infty$.  This is easily seen to be a Banach space under the norm 
\[
\norm{f}_p = \left( \sum_j \norm{f(j)}_{\mathcal{B}_j}^p \right)^{1/p}.
\]

Suppose $\mathcal{B}$ is a Banach space and $\mathcal{M}$ and $\mathcal{N}$ are subspaces of $\mathcal{B}$.  
Recall that $\mathcal{B}$ is the \emph{internal direct sum} of $\mathcal{M}$ and $\mathcal{N}$ if $\mathcal{M} \cap \mathcal{N} = \{\zerovec\}$ and $\mathcal{B} = \mathcal{M} + \mathcal{N}$.  In this case, $\mathcal{M}$ is said to be \emph{complemented} and $\mathcal{N}$ is said to be the \emph{complement} of $\mathcal{M}$.
  When $\mathcal{M}$ is a complemented subspace of $\mathcal{B}$, let $P_{\mathcal{M}}$ denote the associated projection map. 
   That is, $P_{\mathcal{M}}$ is the unique linear map of $\mathcal{B}$ onto $\mathcal{M}$ so that $P_{\mathcal{M}}(f) = f$ for all $f \in \mathcal{M}$ and $P_{\mathcal{M}}(f) = \zerovec$ for all $f \in \mathcal{N}$.

Note that if $T$ is an isometric isomorphism of $\mathcal{B}_0$ onto $\mathcal{B}_1$, and if $\mathcal{M}$ is a complemented subspace of $\mathcal{B}_0$, then $T[\mathcal{M}]$ is a complemented subspace of 
$\mathcal{B}_1$ and $P_{T[\mathcal{M}]} = TP_{\mathcal{M}}T^{-1}$.

Suppose $f,g$ are vectors in an $L^p$ space.   We say that $f$ and $g$ are \emph{disjointly supported} if the intersection of their supports is null; equivalently, if $f \cdot g = \zerovec$.  
We say that $f$ is a \emph{subvector} of $g$ if there is a measurable set $A$ so that 
$f = g \cdot \chi_A$ (where $\chi_A$ is the characteristic function of $A$); equivalently, if 
$g - f$ and $f$ are disjointly supported.  We write $f \preceq g$ if $f$ is a subvector of $g$.  
It is readily seen that $\preceq$ is a partial order.  Also, $f$ is an atom of $\preceq$ if and only if 
$\supp(f)$ is an atom of the underlying measure space.

A subset $X$ of a Banach space $\B$ is \emph{linearly dense} if 
its linear span is dense in $\B$.

Suppose $S \subseteq \N^*$ is a tree and $\phi : S \rightarrow L^p(\Omega)$.  
We say that $\phi$ is \emph{summative} if for every nonterminal node $\nu$ of $S$, 
$\phi(\nu) = \sum_{\nu'} \phi(\nu')$ where $\nu'$ ranges over the children of $\nu$ in $S$.  
We say that $\phi$ is \emph{separating} if $\phi(\nu)$ and $\phi(\nu')$ are disjointly supported whenever $\nu, \nu' \in S$ are incomparable.  We then say that $\phi$ is a \emph{disintegration} if its range is linearly dense, and if it is injective, non-vanishing, summative, and separating.

Fix a disintegration $\phi : S \rightarrow L^p(\Omega)$.  
A non-root node $\nu$ of $S$ is an \emph{almost norm-maximizing} child of its parent if 
\[
\norm{\phi(\nu')}_p^p \leq \norm{\phi(\nu)}_p^p + 2^{-|\nu|}
\]
whenever $\nu' \in S$ is a sibling of $\nu$.  A chain $C \subseteq S$ is \emph{almost norm-maximizing} if 
for every $\nu \in C$, if $\nu$ has a child in $S$, then $C$ contains an almost norm-maximizing child of $\nu$.

Suppose $\B$ is a Banach space.  A \emph{structure} on $\B$ is a surjection of the natural numbers 
onto a linearly dense subset of $\B$.  A \emph{presentation of $\B$} is a pair $(\B, R)$ where $R$ is a structure on $\B$.  

Among all presentations of a Banach space $\mathcal{B}$, one may 
be designated as \emph{standard}; in this case, we will identify $\mathcal{B}$ with its standard presentation.
In particular, if $p \geq 1$ is a computable real, and if $D$ is a standard map of $\N$ onto the set of 
characteristic functions of dyadic subintervals of $[0,1]$, then $(L^p[0,1], D)$ is the standard presentation of $L^p[0,1]$.  If $R(n) = 1$ for all $n \in \N$, then $(\C, R)$ is the standard presentation of $\C$ as a Banach space over itself. The standard presentations of $\ell^p$ and $\ell^p_n$ are given by the standard
bases for these spaces.  The standard presentations of $\ell^p \oplus L^p[0,1]$ and $\ell^p_n \oplus L^p[0,1]$ are defined in the obvious way.

Fix a presentation $\B^\# = (\B, R)$ of a Banach space $\B$.  By a \emph{rational vector} of $\B^\#$ we mean a vector of the form $\sum_{j \leq M} \alpha_j R(j)$ where $\alpha_0, \ldots, \alpha_M \in \Q(i)$.  
We say $\B^\#$ is \emph{computable} if the norm function is computable on the rational vectors of $\B^\#$.  That is, if there is an algorithm that given $\alpha_0, \ldots, \alpha_M \in \Q(i)$ and $k \in \N$ produces a rational number $q$ so that $|\norm{\sum_{j \leq M} \alpha_j R(j)} - q| < 2^{-k}$.  The standard definitions just described are all easily seen to be computable.  A Banach space is \emph{computably presentable} if it has a computable presentation.

With a presentation $\B^\#$ of a Banach space, there are associated classes of computable vectors and sequences.  With a pair $(\B_0^\#, \B_1^\#)$ of presentations of Banach spaces, there is an associated class of computable functions from $\B_0^\#$ into $\B_1^\#$.  
Definitions of concepts such as these have become fairly well-known; we refer the reader to whom they are unfamiliar to Section 2.2.2 of \cite{Clanin.McNicholl.Stull.2019}.  

We will make frequent use of the following result from \cite{Clanin.McNicholl.Stull.2019}.  

\begin{theorem}\label{thm:disint.comp}
Suppose $p \geq 1$ is a computable real so that $p \neq 2$.  
Then, every computable presentation of a nonzero $L^p$ space has a computable disintegration. 
\end{theorem}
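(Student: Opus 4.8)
The plan is to construct, given a computable presentation $\B^\# = (L^p(\Omega), R)$ of a nonzero $L^p$ space, a computable disintegration $\phi \colon S \to L^p(\Omega)$ by building the tree $S$ and the assignment $\phi$ level by level. At each stage I would refine the partition of the support represented by the current leaves, subdividing each $\phi(\nu)$ into finitely many disjointly supported subvectors whose sum recovers $\phi(\nu)$, thereby maintaining the summative and separating conditions by construction. The root would be assigned a vector whose support is (essentially) all of $\Omega$; since the range must be linearly dense, I would interleave into the construction a requirement that each element $R(n)$ of the given structure be approximated by linear combinations of the $\phi(\nu)$, forcing density of the span of the range.

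The key technical device is the following: to split a vector $g$ in a \emph{computable} way into disjointly supported subvectors, I would use the fact that $p \neq 2$ to detect disjointness of supports through the norm. For vectors $f_1, f_2$ in an $L^p$ space, one has $\norm{f_1 + f_2}_p^p = \norm{f_1}_p^p + \norm{f_2}_p^p$ precisely when $f_1$ and $f_2$ are disjointly supported, and when $p \neq 2$ the Clarkson-type inequalities let the norm genuinely distinguish disjoint from overlapping supports. This is exactly the sort of norm-based test that is computable from the presentation, and it is what makes $p \neq 2$ essential: for $p = 2$ the parallelogram law collapses this criterion and disjointness cannot be read off the norm alone. So the first major step is to establish a computable procedure that, given a rational vector approximating $g$ and a target precision, produces rational vectors approximating a splitting of $g$ into subvectors, certified disjoint via norm computations.

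From there I would assemble the global construction. First I would fix an effective enumeration of $\N^*$ and of the structure $R$, and build $S$ and $\phi$ by a priority-free stagewise recursion: at stage $n$ I refine the leaves so that the new partition separates the supports finely enough to approximate $R(n)$ within $2^{-n}$ by a linear combination of current $\phi$-values, simultaneously ensuring injectivity and the non-vanishing condition by discarding any candidate subvector whose norm is too small to be certified nonzero. Summativity and separation hold by the construction; linear density follows from the density requirement; injectivity and non-vanishing follow from the discard rule. The computability of $\phi$ as a sequence of vectors follows because every subvector produced is given by a convergent sequence of rational vectors obtained from the splitting procedure.

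The main obstacle will be reconciling exact disjointness of supports with the merely approximate information available from a computable presentation. A computable norm oracle only tells us $\norm{f_1+f_2}_p^p - \norm{f_1}_p^p - \norm{f_2}_p^p$ up to error, so certifying that two supports are \emph{exactly} disjoint rather than nearly so is delicate; one cannot in general decide equality of reals. The resolution is to work with a limit construction in which the subvectors are defined as genuine limits of the rational approximations, and to exploit the structural rigidity coming from $p \neq 2$ so that the approximate norm identities force the limiting vectors to be exactly disjointly supported. Managing these nested limits while keeping the whole assignment uniformly computable, and verifying that the resulting range is still linearly dense, is where the real work of the proof lies; the surrounding combinatorics of trees and the density bookkeeping are comparatively routine.
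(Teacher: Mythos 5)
First, a point of orientation: the paper does not prove Theorem \ref{thm:disint.comp} at all --- it is imported verbatim from \cite{Clanin.McNicholl.Stull.2019}, so there is no in-paper proof to match your argument against. Judged on its own merits, your outline has the right architecture (a stagewise refinement of a tree of subvectors, with density requirements interleaved), but it rests on a key technical device that is false as stated. The one-sided test ``$\norm{f_1+f_2}_p^p=\norm{f_1}_p^p+\norm{f_2}_p^p$ iff $f_1,f_2$ are disjointly supported'' fails for every $p\neq 2$: at $p=1$ any two nonnegative functions satisfy the identity regardless of supports, and for $p>1$, $p\neq 2$, take $f=\chi_{[0,2]}$ and $g=\chi_{[0,1]}-t\chi_{[1,2]}$; the quantity $\norm{f+g}_p^p-\norm{f}_p^p-\norm{g}_p^p = 2^p+|1-t|^p-3-t^p$ changes sign as $t$ ranges over $(0,\infty)$, so by the intermediate value theorem there is a $t$ giving exact norm additivity for a non-disjoint pair. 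The correct detector is the two-sided, parallelogram-type identity $\norm{f+g}_p^p+\norm{f-g}_p^p=2\left(\norm{f}_p^p+\norm{g}_p^p\right)$, whose equality case (via the pointwise Clarkson--Hanner inequality, valid precisely because $p\neq 2$) characterizes disjointness.

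The second and deeper gap is that even with the correct criterion, the crux of the theorem is the passage from \emph{approximate} norm identities to \emph{exact} disjointness of the limiting vectors, and your proposal names this obstacle without resolving it. What is needed --- and what the actual proof in \cite{Clanin.McNicholl.Stull.2019} supplies --- is a quantitative stability estimate: if the parallelogram defect of a pair $(f,g)$ is at most $\epsilon$, then $(f,g)$ lies within $\delta(\epsilon)$ (with $\delta(\epsilon)\to 0$ effectively) of an exactly disjointly supported pair summing to $f+g$. Without such a lemma, a search over rational vectors certified ``nearly disjoint'' by norm computations gives no guarantee that the stagewise approximations converge at all, let alone to a summative, separating $\phi$ whose values at a node and its children cohere exactly. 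Appealing to ``structural rigidity'' is naming the required lemma, not proving it; that lemma is where essentially all of the analytic work in the real proof lives, and it is also where one must additionally arrange the compatibility of splittings across levels and verify that refining supports finely enough really does capture each $R(n)$ in the closed linear span.
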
 

The following is essentially proven in \cite{McNicholl.2017}.

\begin{theorem}\label{thm:anm.chains}
Suppose $p \geq 1$ is a computable real, and suppose $\B^\#$ is a computable presentation of an $L^p$ space.   If $\phi$ is a computable disintegration of $\B^\#$, then 
there is a partition $\{C_n\}_{n < \kappa}$ (where $\kappa \leq \omega$) of 
$\dom(\phi)$ into uniformly c.e. almost norm-maximizing chains.  
\end{theorem}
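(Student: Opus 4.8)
The plan is to split the tree $S=\dom(\phi)$ into chains by selecting, at each nonterminal node, one almost norm-maximizing child to continue the chain passing through that node; every remaining child, together with the root, then begins a new chain. Accordingly, I would first build a partial computable ``successor'' function $c$ that sends a nonterminal node $\mu$ to one of its almost norm-maximizing children, and afterwards read the chains off from $c$.

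The identity that makes this effective comes from summativity and separation: the children of a node $\mu$ are pairwise disjointly supported and sum to $\phi(\mu)$, so in an $L^p$ space
\[
\norm{\phi(\mu)}_p^p = \sum_{\nu'} \norm{\phi(\nu')}_p^p ,
\]
the sum ranging over the children $\nu'$ of $\mu$ in $S$; in particular this series converges. This is the key to searching for an almost norm-maximizing child despite possibly infinite branching. Writing $\epsilon = 2^{-(|\mu|+1)}$ for the tolerance allotted to each child of $\mu$ (all children share this length), I would enumerate the children $\nu_0, \nu_1, \ldots$ of $\mu$ and, using the computability of $\phi$ afforded by the hypothesis, approximate $\norm{\phi(\mu)}_p^p$ and the $\norm{\phi(\nu_i)}_p^p$ to any desired precision. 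Once at least one child has been seen and the partial sum $\sum_{i<k}\norm{\phi(\nu_i)}_p^p$ is certified to lie within $\epsilon$ of $\norm{\phi(\mu)}_p^p$, the unexamined children satisfy $\sum_{i\ge k}\norm{\phi(\nu_i)}_p^p < \epsilon$, so each of them has $\norm{\phi(\nu_i)}_p^p < \epsilon$ and hence already meets the almost norm-maximizing inequality against every examined sibling. It then suffices to output, as $c(\mu)$, an examined child whose value $\norm{\phi(\cdot)}_p^p$ is within $\epsilon$ of the maximum over $\nu_0, \ldots, \nu_{k-1}$; the built-in tolerance lets this choice be made even though the norms are only approximable, and the resulting $c(\mu)$ satisfies $\norm{\phi(\nu')}_p^p \le \norm{\phi(c(\mu))}_p^p + 2^{-|c(\mu)|}$ for every sibling $\nu'$.

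With $c$ available, I would call a node a \emph{head} when it is the root or a non-chosen child of its parent, and assign to each head $h$ the chain $C_h = \{h, c(h), c(c(h)), \ldots\}$, iterating $c$ until a terminal node is reached (if ever). Each $C_h$ is then an almost norm-maximizing chain, since for every nonterminal $\nu \in C_h$ the chosen child $c(\nu)$ lies in $C_h$. The $C_h$ partition $S$: tracing any node upward, one passes to the parent exactly while the node is the chosen child, halting at the unique head whose iterated $c$-successors reach the original node. Enumerating the heads as $h_0, h_1, \ldots$ and putting $C_n = C_{h_n}$ produces the desired family $\{C_n\}_{n<\kappa}$ with $\kappa \le \omega$.

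I expect the main obstacle to be the effective production of $c$ at nodes of infinite out-degree, where one cannot inspect all siblings; the convergence of $\sum_{\nu'}\norm{\phi(\nu')}_p^p$ to $\norm{\phi(\mu)}_p^p$ resolves this by confining all but finitely many comparisons to siblings whose $\norm{\phi(\cdot)}_p^p$ is already below the tolerance and therefore harmless. A second, more delicate point determines the exact effectiveness of the conclusion: deciding whether a node is terminal is only co-c.e., so although heads are identified and successors found by halting computations, membership in a chain is recognized only as successors are discovered. This is precisely why the chains are obtained as uniformly c.e. sets rather than uniformly computable ones.
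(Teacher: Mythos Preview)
The paper does not prove this theorem; it is stated as background with the remark that it ``is essentially proven in \cite{McNicholl.2017}.''  Your reconstruction is correct and is indeed the natural argument: use summativity and separation to get $\norm{\phi(\mu)}_p^p=\sum_{\nu'}\norm{\phi(\nu')}_p^p$, enumerate children until the tail is certifiably below the tolerance, select an approximately maximal child among those seen, and let the non-selected children start new chains.  Your closing remark that terminality is only co-c.e.\ (so the chains are uniformly c.e.\ rather than uniformly computable) is exactly the right explanation for the form of the conclusion.

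Two small points worth tightening in a full write-up.  First, the two uses of the tolerance $\epsilon=2^{-(|\mu|+1)}$ (once to bound the tail and once to select an approximate maximizer from a finite list) should be split, e.g.\ tail below $\epsilon/2$ and approximate maximizer within $\epsilon/2$, so that the final inequality $\norm{\phi(\nu')}_p^p\le\norm{\phi(c(\mu))}_p^p+\epsilon$ holds cleanly for both examined and unexamined siblings; as stated the two errors could in principle add, though in fact your case analysis shows they do not because the tail bound alone already handles the unexamined siblings.  Second, the ``certification'' that the partial sum is within $\epsilon$ of $\norm{\phi(\mu)}_p^p$ is a $\Sigma^0_1$ event in the computable reals involved, so the search (dovetailing over the number of enumerated children and the precision of the norm approximations) terminates; it is worth saying explicitly that this is why $c$ is a partial computable function with domain exactly the nonterminal nodes.
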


Degrees of categoricity for countable structures were introduced in \cite{Fokina.Kalimullin.Miller.2010}.  
Since then, the study of these degrees has given rise to a number of surprising results and very challenging questions; see, for example, \cite{Csima.Franklin.Shore.2013} and \cite{Anderson.Csima.2016}.  Any notion of classical computable structure theory can be adapted to the setting of Banach spaces by replacing `isomorphism' with `isometric isomorphism'.  Thus, we arrive at the previously given definition of the degree of categoricity for a computably presentable Banach space.

\section{Preliminaries}\label{sec:prelim}

\subsection{Preliminaries from functional analysis}\label{sec:prelim::subsec:FA}
Here we establish several preliminary lemmas and theorems from classical functional analysis that will be used later to prove Theorem \ref{thm:main}.  We first establish the results needed to locate the $\preceq$-atoms of $L^p(\Omega)$ via the use of almost norm-maximizing chains.  We then conclude this section with results regarding disintegrations on complemented subspaces of $L^p(\Omega)$.

The proof of the following is essentially the same as the proof of Proposition 4.1 of \cite{McNicholl.2017}.

\begin{proposition}\label{prop:subvectorLimitsExist}If $g_0 \preceq g_1 \preceq ...$ are vectors in $L^p(\Omega)$, then $\lim_n g_n$ exists in the $L^p$-norm and is the $\preceq$-infimum of $\{g_0,g_1,...\}$.
\end{proposition}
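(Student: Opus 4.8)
The plan is to peel the problem down to a nested family of characteristic functions and then let completeness of $L^p$ and continuity of a finite measure do the work. For the conclusion to hold unconditionally---so that $\lim_n g_n$ exists and equals the $\preceq$-\emph{infimum}---the hypothesis should be read as a $\preceq$-decreasing chain $g_0 \succeq g_1 \succeq \cdots$, i.e.\ each $g_{n+1}$ is a subvector of $g_n$; this is the case used later to locate $\preceq$-atoms, and I treat it throughout. The first step is bookkeeping: unwinding $g_{n+1} = g_n\cdot\chi_{A_n}$ produces measurable sets $B_0 \supseteq B_1 \supseteq \cdots$, namely $B_n = \bigcap_{k<n}A_k$, with $g_n = g_0\cdot\chi_{B_n}$ for every $n$. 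In particular the norms $\norm{g_n}_p$ are nonincreasing, which is morally why a limit must exist. Put $B = \bigcap_n B_n$ and let $\nu$ be the finite measure $\nu(S) = \int_S |g_0|^p\,d\mu$, so that $\nu(\Omega) = \norm{g_0}_p^p < \infty$.

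Next I would establish convergence and simultaneously identify the limit. Since $B \subseteq B_n$ we have $g_n - g_0\chi_B = g_0\cdot\chi_{B_n\setminus B}$, whence $\norm{g_n - g_0\chi_B}_p^p = \nu(B_n\setminus B) = \nu(B_n) - \nu(B)$. As $B_n \downarrow B$ and $\nu$ is finite, continuity from above gives $\nu(B_n) \to \nu(B)$, so $g_n \to g_0\chi_B$ in $L^p$; set $g = g_0\chi_B$. (Equivalently, the same computation with $B_m$ in place of $B$, for $m\geq n$, shows $(g_n)$ is Cauchy, and completeness of $L^p(\Omega)$ furnishes the limit, which is then identified as $g_0\chi_B$.)

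It remains to verify that $g$ is the $\preceq$-infimum of $\{g_0, g_1, \ldots\}$. That $g$ is a lower bound is immediate: since $B\subseteq B_n$, the vectors $g = g_0\chi_B$ and $g_n - g = g_0\chi_{B_n\setminus B}$ are disjointly supported, so $g \preceq g_n$ for every $n$. For the greatest-lower-bound property, suppose $h \preceq g_n$ for all $n$ and write $h = g_n\cdot\chi_{C_n}$. Then $\supp(h) \subseteq \supp(g_n) \subseteq B_n$ modulo null sets, and $h = g_n$ on $\supp(h)$; intersecting over $n$ gives $\supp(h) \subseteq B$ up to a null set, and there $h = g_n = g_0 = g$. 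Since $h$ vanishes off its support, $h = g\cdot\chi_{\supp(h)}$, i.e.\ $h \preceq g$. Hence $g = \inf_{\preceq}\{g_n\}$.

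I expect the only genuine subtlety to be this last, greatest-lower-bound direction, where one must transport information about an arbitrary lower bound $h$ from each individual $g_n$ to the limit $g$. Having the explicit description $g = g_0\chi_B$ with $B = \bigcap_n B_n$ is exactly what keeps this transparent: it lets the argument proceed by nested support containments and avoids having to take products of merely $L^p$-convergent sequences (which would otherwise force passing to an a.e.-convergent subsequence). Everything else---the reduction to nested characteristic functions, and the convergence itself---is routine, with monotonicity of $(\nu(B_n))$ doing all the analytic work.
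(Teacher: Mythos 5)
Your proof is correct. There is nothing in the paper to compare it against line by line: the authors give no argument for this proposition, only the remark that it is "essentially the same as the proof of Proposition 4.1 of \cite{McNicholl.2017}", and the argument there is the telescoping one you mention parenthetically --- for a $\preceq$-decreasing chain, disjointness of $g_m - g_n$ and $g_n$ gives $\norm{g_m - g_n}_p^p = \norm{g_m}_p^p - \norm{g_n}_p^p$, so monotonicity of the norms makes the sequence Cauchy, and the limit is then verified to be the infimum. Your primary route --- writing $g_n = g_0\chi_{B_n}$ with the $B_n$ nested and invoking continuity from above of the finite measure $\nu(S) = \int_S |g_0|^p\,d\mu$ --- is the same amount of work but yields the explicit formula $\inf_n g_n = g_0\chi_{\bigcap_n B_n}$, which reduces the greatest-lower-bound check to support containments instead of a second limiting argument; that is a small but genuine gain in transparency. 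You are also right to read the hypothesis as a decreasing chain $g_0 \succeq g_1 \succeq \cdots$: as printed, an increasing chain need not converge (e.g.\ $\chi_{[0,n]}$ in $L^p(\R)$, or even suitable unbounded step functions on $[0,1]$) and its $\preceq$-infimum is trivially $g_0$, whereas the decreasing case is exactly what is used for almost norm-maximizing chains in Theorem \ref{thm:limitsAreAtoms}, so the displayed direction is surely a typo.
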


The following generalizes Theorem 3.4 of \cite{McNicholl.2017}.  

\begin{theorem}\label{thm:limitsAreAtoms}
Suppose $\Omega$ is a measure space and $\phi: S \rightarrow L^p(\Omega)$ is a disintegration.\\
\begin{enumerate}
	\item If $C \subseteq S$ is an almost norm-maximizing chain, then the $\preceq$-infimum of $\phi[C]$ exists and is either \textbf{0} or an atom of $\preceq$.  Furthermore, $\inf\phi[C]$ is the limit in the $L^p$ norm of $\phi(\nu)$ as $\nu$ traverses the nodes in $C$ in increasing order.\label{thm:limitsAreAtoms::itm:inf}
		
	\item If $\{C_n\}_{n=0}^\infty$ is a partition of $S$ into almost norm-maximizing chains, then $\inf\phi[C_0], \inf\phi[C_1], ...$ are disjointly supported.  Furthermore, if $A$ is an atom of $\Omega$, then there exists a unique $n$ so that $A$ is the support of $\inf \phi[C_n]$. \label{thm:limitsAreAtoms::itm:unique} 
\end{enumerate}
\end{theorem}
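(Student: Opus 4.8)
The plan is to isolate one structural dichotomy and feed it into a linear-density argument; this settles the atom claim in (\ref{thm:limitsAreAtoms::itm:inf}) and drives both assertions in (\ref{thm:limitsAreAtoms::itm:unique}). Throughout I write $A_\nu := \supp(\phi(\nu))$. First, for (\ref{thm:limitsAreAtoms::itm:inf}): ordering $C$ by $\subseteq$, summativity together with the separating property gives $\phi(\mu)\preceq\phi(\nu)$ whenever $\nu\subseteq\mu$, so $\{\phi(\nu)\}_{\nu\in C}$ is a $\preceq$-descending chain. Proposition~\ref{prop:subvectorLimitsExist}, applied to this monotone chain, then yields that $g:=\lim_{\nu\in C}\phi(\nu)$ exists in the $L^p$-norm and equals $\inf\phi[C]$, which is the ``furthermore'' clause. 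It remains to show $g$ is $\mathbf 0$ or an atom. The key lemma is the dichotomy: for every $\nu\in S$, $\phi(\nu)\chi_{\supp(g)}\in\{\mathbf 0,g\}$. Indeed, suppose $A_\nu\cap\supp(g)$ is non-null. Since $\supp(g)\subseteq A_{\nu'}$ for each $\nu'\in C$ (as $g\preceq\phi(\nu')$), the supports of $\phi(\nu)$ and $\phi(\nu')$ meet, so by the separating property $\nu$ is comparable to every $\nu'\in C$; as $\phi(\nu')\to g$ this forces $\nu\subseteq\nu'$ for some $\nu'\in C$, whence $g\preceq\phi(\nu')\preceq\phi(\nu)$ and $\phi(\nu)\chi_{\supp(g)}=g$. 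Otherwise $\phi(\nu)\chi_{\supp(g)}=\mathbf 0$. Now $f\mapsto f\chi_{\supp(g)}$ is a bounded linear surjection onto $L^p(\supp(g))$, hence carries the dense span of $\ran(\phi)$ to a dense subspace; by the dichotomy this image lies in $\C\cdot g$. Thus $L^p(\supp(g))=\overline{\C\cdot g}=\C\cdot g$ is at most one-dimensional, forcing $\supp(g)$ to be an atom (unless $g=\mathbf 0$).

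For the disjointness in (\ref{thm:limitsAreAtoms::itm:unique}), fix $m\ne n$ and set $g_m,g_n$ for the infima. If $\supp(g_m)\cap\supp(g_n)$ were non-null, then for each $\nu\in C_m$ we have $A_\nu\supseteq\supp(g_m)$, so $A_\nu$ meets $\supp(g_n)$; the dichotomy for $C_n$ gives $\supp(g_n)\subseteq A_\nu$, and intersecting over $\nu\in C_m$ yields $\supp(g_n)\subseteq\supp(g_m)$. By symmetry $\supp(g_m)=\supp(g_n)=:A$, and evaluating both dichotomies on $A$ gives $g_m=\phi(\nu)\chi_A=g_n$. Since $A$ is an atom, at each node of the path $T:=\{\nu: A\subseteq A_\nu\}$ the atom lies in the support of exactly one child, so the almost norm-maximizing child selected by either chain is forced to be that unique $T$-child; beginning at the shortest node of $C_m\cup C_n\subseteq T$ this makes $C_m$ and $C_n$ agree from there downward, contradicting their disjointness. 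Hence the $g_n$ are pairwise disjointly supported, which in particular gives the uniqueness half of the last claim.

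For existence, let $A$ be an atom and again let $T=\{\nu: A\subseteq A_\nu\}$, a single descending path $\nu^{(0)}\subsetneq\nu^{(1)}\subsetneq\cdots$ (uniqueness of the $A$-containing child at each step uses that $A$ is an atom). Since $T$ is $\preceq$-descending, $h:=\inf\phi[T]=\lim_k\phi(\nu^{(k)})$ exists; running the dichotomy and linear-density argument of (\ref{thm:limitsAreAtoms::itm:inf}) verbatim for $T$ shows $\supp(h)$ is an atom containing $A$, hence $\supp(h)=A$, so the remainders $r_k:=\phi(\nu^{(k)})-\phi(\nu^{(k)})\chi_A$ satisfy $\norm{r_k}_p\to 0$. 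Now if the chain passing through $\nu^{(k)}$ ``hands off'' by selecting an almost norm-maximizing child $\mu_k\ne\nu^{(k+1)}$, then $\mu_k$ is a subvector of $r_k$, so $\norm{\phi(\mu_k)}_p^p\le\norm{r_k}_p^p$; yet the almost norm-maximizing inequality forces $\norm{\phi(\mu_k)}_p^p\ge\norm{\phi(\nu^{(k+1)})}_p^p-2^{-|\nu^{(k)}|}\ge\norm{h}_p^p-2^{-|\nu^{(k)}|}$. As $\norm{h}_p>0$ while $\norm{r_k}_p\to0$, hand-offs cease for large $k$, so a single chain $C_n$ contains a tail of $T$; therefore $g_n=\lim_k\phi(\nu^{(k)})=h$ and $\supp(g_n)=A$.

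The step I expect to demand the most care is this last reconciliation: turning $\norm{r_k}_p\to0$ into ``almost norm-maximizing chains eventually track the atom.'' It hinges on simultaneously exploiting that every off-path sibling is a subvector of $r_k$ (an upper bound) and that the almost norm-maximizing condition pins its norm from below, and on managing the $2^{-|\nu^{(k)}|}$ error terms against the vanishing of $\norm{r_k}_p$; this is precisely where the hypothesis that $C_n$ is almost norm-maximizing, rather than merely $\preceq$-descending, is essential. Since establishing $\norm{r_k}_p\to0$ itself reduces to the dichotomy lemma, that lemma is the genuine workhorse underlying every part of the theorem.
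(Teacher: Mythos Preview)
Your argument is correct and organized around a single reusable observation—the dichotomy $\phi(\nu)\chi_{\supp(g)}\in\{\mathbf 0,g\}$—which you then feed into a linear-density argument to get one-dimensionality of $L^p(\supp(g))$. The paper does not isolate this lemma. For (\ref{thm:limitsAreAtoms::itm:inf}) the paper instead takes an arbitrary $h\preceq g$, approximates $h$ by a rational combination $f$ of $\phi(\nu)$'s, and computes $f\cdot\chi_{\supp(g)}=\beta g$ directly (this is your dichotomy in disguise) to squeeze $\min\{\norm{g-h}_p^p,\norm{h}_p^p\}$ to $0$. For disjointness in (\ref{thm:limitsAreAtoms::itm:unique}) the paper bypasses the dichotomy entirely and exhibits incomparable nodes $\nu_0\in C_k$, $\nu_1\in C_{k'}$ by a short case split on whether the two chains hit a common level; your route, pushing both chains into the single path $T$ and arguing they must be cofinal there, is more conceptual but longer. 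For existence the paper also follows the path $T=\{\nu:A\subseteq A_\nu\}$ but proves stabilization of the chain index by a \emph{summability} bound (the off-path almost-norm-maximizing children are pairwise disjoint, so $\sum_m\norm{\phi(\nu_{s_m+1})}_p^p<\infty$, contradicting $\norm{\phi(\nu_{s_m+1})}_p\ge\norm{f}_p>0$), whereas you compare the single off-path norm $\norm{\phi(\mu_k)}_p^p$ against $\norm{h}_p^p-2^{-|\mu_k|}$ and let $\norm{r_k}_p\to 0$ do the work. Both approaches are valid; yours is tidier once the dichotomy is in hand, the paper's is more hands-on at each step.

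Two small points to tighten. First, in your dichotomy proof the clause ``as $\phi(\nu')\to g$ this forces $\nu\subseteq\nu'$ for some $\nu'\in C$'' only handles the case that $C$ is infinite; when $C$ is finite you need the almost norm-maximizing hypothesis to conclude that $\max C$ is terminal in $S$, ruling out $\nu'\subsetneq\nu$ for all $\nu'$. Second, when you re-run the dichotomy for the path $T$ in the existence argument, the justification is not quite ``verbatim'': $T$ is not almost norm-maximizing, and the case $\nu'\subsetneq\nu$ for all $\nu'\in T$ is instead excluded by the defining property of $T$ (if $\mu=\max T$ had a child, the atom $A$ would lie under exactly one child, extending $T$). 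You should also note explicitly that $\emptyset\in T$ (equivalently $A\subseteq\supp(\phi(\emptyset))$), which follows from linear density, so that $h\ne\mathbf 0$ and $\supp(h)\supseteq A$.
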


\begin{proof}
(\ref{thm:limitsAreAtoms::itm:inf}):  Suppose $C \subseteq S$ is an almost norm-maximizing chain. By Proposition \ref{prop:subvectorLimitsExist}, $g:=\inf \phi[C]$ exists and is the limit in the $L^p$-norm of $\phi(\nu)$ as $\nu$ traverses the nodes in $C$ in increasing order.  

We claim that $g$ is an atom if it is nonzero.  For, suppose $h \preceq g$.  Let $\delta = \min\{\norm{g - h}_p^p, \norm{h}_p^p\}$, and let 
$\epsilon> 0$.  Since the range of $\phi$ is linearly dense, there is a finite
$S_1 \subseteq S$ and a family of scalars $\{\alpha_\nu\}_{\nu \in S_1}$ so that 
\[
\norm{\sum_{\nu \in S_1} \alpha_\nu \phi(\nu) - h}_p < \frac{\epsilon}{2}.
\]
Let $f = \sum_{\nu \in S_1} \alpha_\nu \phi(\nu)$.  Then, 
\begin{eqnarray*}
\norm{f - g}_p^p & \geq & \norm{(f - h) \cdot \chi_{\supp(g)}}_p^p \\
& = & \norm{f \cdot \chi_{\supp(g)} - h}_p^p.
\end{eqnarray*}
Let $S_1^0 = \{\nu \in S_1\ :\ g \preceq \phi(\nu)\}$, and let $\beta = \sum_{\nu \in S_1^0} \alpha_\nu$.  Then, since $\phi$ is separating, 
$f \cdot \chi_{\supp(g)} = \beta g$.  However, since $g - h$ and $h$ are disjointly supported,
\begin{eqnarray*}
\norm{\beta g - h}_p^p & = & |\beta|^p\norm{g - h}_p^p + |\beta - 1|^p \norm{h}_p^p \\
& \geq & (|\beta|^p + |\beta -1|^p) \delta \\
& \geq & (|\beta|^p + ||\beta| - 1|^p) \delta\\
& \geq & \max\{|\beta|^p, ||\beta| - 1|^p\} \delta\\
& \geq & 2^{-p} \delta.
\end{eqnarray*}
Thus, $\delta < \epsilon$ for every $\epsilon > 0$.  Therefore, $\delta = 0$ and so either $g = h$ or $h = \zerovec$.  Thus, $g$ is an atom.
\\
	
	(\ref{thm:limitsAreAtoms::itm:unique}):   Suppose $C_0,C_1,...$ is a partition of $S$ into almost norm-maximizing chains.  By the above, $\inf \phi([C_k])$ exists for each $k$, and so we set $h_k := \inf \phi[C_k]$. We first claim that $h_0, h_1, \ldots$ are disjointly supported vectors.  Supposing that $k \neq k'$ it suffices to prove that there are incomparable nodes $\nu_0,\nu_1$ such that $\nu_0 \in C_k$ and $\nu_1 \in C_{k'}$.  We do this in two cases.
	
First, suppose there exist $\nu \in C_k, \nu' \in C_{k'}$ such that $|\nu| = |\nu'|$.
Since the chains $C_0,C_1,...$ partition $S$, $\nu \neq \nu'$.  Thus, $\nu_0:=\nu$ and $\nu_1:=\nu'$ are incomparable.
	
Now suppose $|\nu| \neq |\nu'|$ whenever $\nu \in C_k$ and $\nu' \in C_{k'}$.
	Let $\nu$ be the $\subseteq$-minimal node in $C_k$ and let $\nu'$ be the $\subseteq$-minimal node in $C_{k'}$.  Without loss of generality, assume $|\nu|<|\nu'|$.  Then $C_k$ must contain a terminal node $\tau_k$ of $S$, and $|\tau_k|<|\nu'|$.  Let $\mu\in S$ be the ancestor of $\nu'$ such that $|\mu|=|\tau_k|$.  Note that $\mu \notin C_{k'}$ since $|\mu| < |\nu'|$.  Furthermore, $\mu \notin C_k$ either, for $\tau_k$ is terminal in $S$.  Therefore, since $|\mu|=|\tau_k|$, $\mu$ and $\tau_k$ are incomparable.  From this it follows that $\nu_0:=\nu$ and $\nu_1:=\nu'$ are incomparable.
	
	Now let $A$ be an atom of $\Omega$.  If there is a $\preceq$-atom $g$ in $\ran(\phi)$ whose support includes $A$ then there is nothing to show.  So suppose that there is no atom in $\ran(\phi)$ whose support includes $A$.
	
	We claim that for each $n \in \N$ there is a $\nu \in S$ so that $|\nu|=n$ and $A \subseteq \supp(\phi(\nu))$.  For, suppose otherwise.  Since $\phi$ is summative and separating, it follows that $A \not \subseteq \supp(\phi(\nu))$ for all $\nu \in S$.  Let $\mu$ denote the measure function of $\Omega$.  Then, for any $g \in \ran(\phi)$, $\mu(A\cap\ \supp(g))=0$. Thus $\mu(A) \leq \norm{f -\chi_A}_p$ whenever $f$ belongs to the linear span of $\ran(\phi)$- a contradiction since the range of $\phi$ is linearly dense.  
	
	Now let $\nu_s$ denote the node of length $s$ so that $A\subseteq \supp(\phi(\nu_s))$.  Let $f = \phi(\emptyset) \cdot \chi_A$.  Then, $f \preceq \phi(\nu_s)$ for all $s$.  
	
	For each $s$, let $k_s$ denote the $k$ so that $\nu_s\in C_k$. We claim that $\lim_s k_s$ exists.  To see this, suppose otherwise.  Then we may let $s_0<s_1<...$ be the increasing enumeration of all values of $s$ so that $k_s\neq k_{s+1}$.  Since for all $m$, $\nu_{s_m+1} \supset \nu_{s_m}$, $\nu_{s_m}$ is a nonterminal node in $S$.  Thus since $C_{k_{s_m}}$ is almost norm-maximizing it must contain a child of $\nu_{s_m}$ in $S$; denote this child by $\mu_m$.  Then, $\phi(\mu_m)\preceq \phi(\nu_{s_m})$ and $\phi(\mu_m)$ and $\phi(\nu_{s_m+1})$ are disjointly supported.  Also, since $\mu_m$ is an almost norm-maximizing child of $\nu_{s_m}$, $\norm{\phi(\nu_{s_m+1})}^p_p \leq \norm{\phi(\mu_m)}_p^p+2^{-s_m}$.  Since $\phi(\mu_{m+r})\preceq \phi(\nu_{s_{m+r}})\preceq \phi(\nu_{s_m+1})$, $\phi(\mu_m)$ and $\phi(\mu_{m+r})$ are disjointly supported if $r>0$.  Thus by the above inequality and the summativity of $\phi$ we have
	\begin{align*}
		\sum_m\norm{\phi(\nu_{s_m+1})}_p^p
		&\leq \sum_m \norm{\phi(\mu_m)}_p^p + \sum_{m}2^{-s_m} \\
		&=\norm{\sum_m \phi(\mu_m)}_p^p + \sum_{m}2^{-s_m}\\
		&\leq \norm{\phi(\emptyset)}_p^p+ \sum_{m}2^{-s_m}\\
		&<\infty.
	\end{align*}
	But since $f \preceq \phi(\nu_{s_m+1})$ for all $m$, $\norm{\phi(\nu_{s_m + 1})}_p^p \geq \norm{f} > 0$ for all $m$- a contradiction.  
	
	Therefore, $k:=\lim_s k_s$ exists.  Since the chains partition $S$, $C_k$ is the only chain so that $A \subseteq \supp(\phi(\nu))$ for all $\phi(\nu)\in\phi[C_k]$.  It follows immediately from part (\ref{thm:limitsAreAtoms::itm:inf}) that $A$ is the support of $\inf \phi[C_k]$.  The result now follows.
\end{proof}

We say that subspaces $\mathcal{M}$, $\mathcal{N}$ of $L^p(\Omega)$ are disjointly supported if 
$f$, $g$ are disjointly supported whenever $f \in \mathcal{M}$ and $g \in \mathcal{N}$.  

\begin{lemma}\label{lm:proj.disint}
Suppose $\phi$ is a disintegration of $L^p(\Omega)$ and that $\mathcal{M}$ is a complemented subspace of $L^p(\Omega)$.  Suppose also that $\mathcal{M}$ and its complement are disjointly supported.  Then, $P_{\mathcal{M}} \phi$ is summative and separating, and its range is linearly dense in $\mathcal{M}$.
\end{lemma}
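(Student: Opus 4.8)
The plan is to verify each of the three claimed properties of $P_{\mathcal{M}}\phi$ in turn, exploiting the disjoint-support hypothesis to make $P_{\mathcal{M}}$ behave like multiplication by a characteristic function. The key observation I would establish first is that for any $f \in L^p(\Omega)$, the vector $P_{\mathcal{M}}(f)$ is a subvector of $f$. Indeed, writing $f = m + n$ with $m \in \mathcal{M}$ and $n \in \mathcal{N}$ (the complement), we have $P_{\mathcal{M}}(f) = m$; since $m$ and $n$ are disjointly supported, $\supp(m)$ and $\supp(n)$ are disjoint, so $f = m + n$ agrees with $m$ on $\supp(m)$ and with $n$ off of it. Hence $m = f \cdot \chi_{\supp(m)}$, giving $P_{\mathcal{M}}(f) \preceq f$. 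This ``restriction'' behaviour is the engine for everything else.

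For \emph{summativity}, suppose $\nu$ is a nonterminal node of $S$ with children $\nu'$. Since $\phi$ is summative, $\phi(\nu) = \sum_{\nu'} \phi(\nu')$, and since $P_{\mathcal{M}}$ is linear (and continuous), $P_{\mathcal{M}}\phi(\nu) = \sum_{\nu'} P_{\mathcal{M}}\phi(\nu')$, which is exactly summativity of $P_{\mathcal{M}}\phi$. For \emph{separation}, suppose $\nu, \nu' \in S$ are incomparable. Then $\phi(\nu)$ and $\phi(\nu')$ are disjointly supported because $\phi$ is separating. By the first observation, $P_{\mathcal{M}}\phi(\nu) \preceq \phi(\nu)$ and $P_{\mathcal{M}}\phi(\nu') \preceq \phi(\nu')$, so their supports are contained in $\supp(\phi(\nu))$ and $\supp(\phi(\nu'))$ respectively; since the latter two are disjoint, $P_{\mathcal{M}}\phi(\nu)$ and $P_{\mathcal{M}}\phi(\nu')$ are disjointly supported. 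Thus $P_{\mathcal{M}}\phi$ is separating.

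For \emph{linear density of the range in $\mathcal{M}$}, I would argue as follows. Let $g \in \mathcal{M}$ and $\epsilon > 0$. Since $\ran(\phi)$ is linearly dense in $L^p(\Omega)$ and $g \in \mathcal{M} \subseteq L^p(\Omega)$, there are nodes $\nu_1, \ldots, \nu_k \in S$ and scalars $\alpha_1, \ldots, \alpha_k$ with $\norm{g - \sum_i \alpha_i \phi(\nu_i)}_p < \epsilon$. Applying the projection $P_{\mathcal{M}}$, which is a bounded operator, and using that $P_{\mathcal{M}}(g) = g$ together with linearity, we obtain
\[
\norm{g - \sum_i \alpha_i P_{\mathcal{M}}\phi(\nu_i)}_p = \norm{P_{\mathcal{M}}\left(g - \sum_i \alpha_i \phi(\nu_i)\right)}_p \leq \norm{P_{\mathcal{M}}} \cdot \epsilon.
\]
Since $\epsilon$ was arbitrary, the linear span of $\ran(P_{\mathcal{M}}\phi)$ is dense in $\mathcal{M}$, as required.

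The main obstacle, and the step requiring the most care, is the first observation that $P_{\mathcal{M}}(f) \preceq f$; everything else follows formally once this is in hand. The subtlety is that one must use the disjoint-support hypothesis on $\mathcal{M}$ and $\mathcal{N}$ genuinely, not merely the decomposition $f = m + n$: without disjointness there is no reason for $m$ to be a pointwise restriction of $f$. I would be careful to phrase this in terms of supports and the characterization of $\preceq$ given earlier (namely that $f \preceq g$ iff $g - f$ and $f$ are disjointly supported), applying it with $g = f$, $f \rightsquigarrow m$, and $g - f \rightsquigarrow n$. A minor secondary point is the use of boundedness of $P_{\mathcal{M}}$ in the density argument, which is automatic since a projection onto a complemented subspace of a Banach space is bounded by the closed graph theorem, though here $P_{\mathcal{M}}$ is in fact the restriction-type map and its norm is at most $1$.
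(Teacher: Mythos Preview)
Your proof is correct and follows essentially the same approach as the paper: both first observe that disjoint support forces $P_{\mathcal{M}}\phi(\nu)\preceq\phi(\nu)$, derive summativity from linearity and separation from that subvector relation, and then push an approximation of $g\in\mathcal{M}$ through $P_{\mathcal{M}}$. The only cosmetic difference is that the paper bounds the density step by splitting $\norm{\,\cdot\,}_p^p$ additively over the disjointly supported $\mathcal{M}$- and $\mathcal{N}$-components, whereas you invoke boundedness of $P_{\mathcal{M}}$; both yield the same conclusion.
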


\begin{proof}
Let $P = P_{\mathcal{M}}$, and let $\psi = P\phi$.   Since $P$ is linear, it follows that $\psi$ is summative.  Since $\mathcal{M}$ and its complement are disjointly supported, it also follows that $\psi(\nu)$ is a subvector of $\phi(\nu)$ for each $\nu \in \dom(\phi)$.  We can then infer that $\psi$ is separating.  

We now show that the range of $P\phi$ is linearly dense in $\mathcal{M}$.  Let $\epsilon>0$.  By the linear density of $\phi$ and the disjointness of support of $\mathcal{M}$ and $\mathcal{N}$, for any $f\in\mathcal{M}$ there is a collection of scalars $\{\alpha_\nu\}_{\nu\in S}$ such that
\begin{align*}
\epsilon^p&>\norm{f-\sum_{\nu\in S}\alpha_\nu \phi(\nu)}_p^p\\
&=\norm{f-\sum_{\nu\in S}\alpha_\nu P(\phi(\nu))-\sum_{\nu\in S}\alpha_\nu P_{\mathcal{N}}(\phi(\nu))}_p^p\\
&=\norm{f-\sum_{\nu\in S}\alpha_\nu P(\phi(\nu))}^p_p+\norm{\sum_{\nu\in S}\alpha_\nu P_{\mathcal{N}}(\phi(\nu))}_p^p\\
&\geq \norm{f-\sum_{\nu\in S}\alpha_\nu P(\phi(\nu))}^p_p.
\end{align*}
Thus we have that the range of $P \phi$ is linearly dense in $\mathcal{M}$.
\end{proof}

\begin{lemma}\label{lm:proj.disint.Lp}
Suppose $\phi$ is a disintegration of $\mathcal{M} \oplus L^p[0,1]$ where $\mathcal{M}$ is either $\ell^p$ or $\ell^p_n$.  Suppose $\{C_n\}_{n \in \N}$ is a partition of $\dom(\phi)$ into almost norm-maximizing chains and that $g_n = \inf \phi[C_n]$ for all $n$.  Then, for each $\nu \in \dom(\phi)$, 
\[
P_{\{\zerovec\} \oplus L^p[0,1]} \phi(\nu) = \phi(\nu) - \sum_{g_n \preceq \phi(\nu)} g_n.
\]
\end{lemma}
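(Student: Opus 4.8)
The plan is to show that the two spaces $\mathcal{M}\oplus\{\zerovec\}$ (the ``atomic part'') and $\{\zerovec\}\oplus L^p[0,1]$ (the ``nonatomic part'') are disjointly supported complemented subspaces of $L^p(\Omega)$, so that for each vector $v$ we have the orthogonal-style decomposition $v = P_{\mathcal{M}\oplus\{\zerovec\}}v + P_{\{\zerovec\}\oplus L^p[0,1]}v$. Granting this, the content of the lemma reduces to identifying $P_{\mathcal{M}\oplus\{\zerovec\}}\phi(\nu)$ with $\sum_{g_n\preceq\phi(\nu)} g_n$; the displayed formula then follows by subtraction. So the real work is to prove
\[
P_{\mathcal{M}\oplus\{\zerovec\}}\phi(\nu) = \sum_{g_n \preceq \phi(\nu)} g_n.
\]

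First I would invoke Theorem \ref{thm:limitsAreAtoms}: since $\{C_n\}_{n\in\N}$ is a partition of $\dom(\phi)$ into almost norm-maximizing chains, each $g_n = \inf\phi[C_n]$ is either $\zerovec$ or a $\preceq$-atom, the nonzero $g_n$ are pairwise disjointly supported, and every atom $A$ of $\Omega$ arises as $\supp(g_n)$ for a unique $n$. Because $\mathcal{M}$ is $\ell^p$ or $\ell^p_n$, the subspace $\mathcal{M}\oplus\{\zerovec\}$ is exactly the closed span of the vectors supported on atoms of $\Omega$; thus the family of nonzero $g_n$ is (up to normalization) a complete list of disjointly supported atomic generators, and the projection onto the atomic part is realized by summing the atomic components. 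The key observation is that $\phi(\nu)$, being summative and separating, decomposes as its restriction to the atomic part plus its restriction to the nonatomic part, and the atomic part of $\phi(\nu)$ is precisely $\phi(\nu)\cdot\chi_{B}$ where $B$ is the union of those atoms contained in $\supp(\phi(\nu))$.

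The crux is therefore to verify that the atoms contained in $\supp(\phi(\nu))$ are exactly the supports of those $g_n$ with $g_n \preceq \phi(\nu)$, and that on each such atom $\phi(\nu)$ agrees with $g_n$. For the first point I would argue that since each $C_n$ is a chain and $g_n = \lim \phi(\mu)$ along $C_n$ with $g_n$ supported on an atom $A$, the analysis in the proof of Theorem \ref{thm:limitsAreAtoms}(\ref{thm:limitsAreAtoms::itm:unique}) shows $A\subseteq\supp(\phi(\mu))$ for cofinally many $\mu\in C_n$; combined with summativity and separation this forces either $g_n\preceq\phi(\nu)$ or $g_n$ disjointly supported from $\phi(\nu)$, with the former holding exactly when $A\subseteq\supp(\phi(\nu))$. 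For the second point, on an atom $A$ a subvector is determined up to scalar, and the limit relation $g_n=\lim\phi(\mu)\cdot\chi_A$ together with $\phi(\nu)\cdot\chi_A \preceq \phi(\mu)\cdot\chi_A$ for the appropriate $\mu$ pins down $\phi(\nu)\cdot\chi_A = g_n$. Summing over the finitely or countably many atoms below $\supp(\phi(\nu))$ and using disjointness of supports to add the $L^p$ contributions then yields $P_{\mathcal{M}\oplus\{\zerovec\}}\phi(\nu)=\sum_{g_n\preceq\phi(\nu)} g_n$.

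I expect the main obstacle to be the convergence bookkeeping: the sum $\sum_{g_n\preceq\phi(\nu)} g_n$ may be infinite (when $\mathcal{M}=\ell^p$), so I must check it converges in $L^p$ to the atomic part of $\phi(\nu)$ rather than merely agreeing atom-by-atom. This follows because the $g_n$ are disjointly supported subvectors of $\phi(\nu)$, so $\sum_{g_n\preceq\phi(\nu)}\norm{g_n}_p^p \leq \norm{\phi(\nu)}_p^p < \infty$, giving absolute convergence in the $L^p$ norm by Proposition \ref{prop:subvectorLimitsExist} applied to the partial sums. The remaining care is to confirm that no atom below $\supp(\phi(\nu))$ is missed — which is exactly the uniqueness clause of Theorem \ref{thm:limitsAreAtoms}(\ref{thm:limitsAreAtoms::itm:unique}) — so that the atomic part of $\phi(\nu)$ is fully accounted for and nothing from the nonatomic part leaks in.
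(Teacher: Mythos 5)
Your proposal is correct and follows essentially the same route as the paper: both reduce the identity to showing that the $\preceq$-atoms below $\phi(\nu)$ are exactly the nonzero $g_n$ with $g_n \preceq \phi(\nu)$, invoking Theorem \ref{thm:limitsAreAtoms}(\ref{thm:limitsAreAtoms::itm:unique}) together with a comparability argument between $\nu$ and the chain $C_n$. The paper phrases that comparability step via the downset $C_n\downarrow$ while you phrase it via supports of atoms, and you add explicit convergence bookkeeping that the paper leaves implicit, but these differences are cosmetic.
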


\begin{proof}
Let $\mathcal{B} = \mathcal{M} \oplus L^p[0,1]$, and let $P = P_{\{\zerovec\} \oplus L^p[0,1]}$.  For each $f \in \mathcal{B}$, let $\mathcal{A}_f$ denote the set of all atoms $g$ of $\mathcal{B}$ so that $g \preceq f$.  Thus, $P(f) = f - \sum_{g \in \mathcal{A}_f} g$.  Suppose $g \in \mathcal{A}_{\phi(\nu)}$.  Then, $\supp(g)$ is an atom.  So, by Theorem \ref{thm:limitsAreAtoms}, $\supp(g) = \supp(g_n)$ for some $n$.   

We claim that $\nu \in C_n\downarrow$.  For, suppose $\nu \not \in C_n\downarrow$.  Let $\nu'$ be the largest node in $C_n\downarrow$ so that $\nu' \subseteq \nu$.  Thus, $\nu' \neq \nu$ so and $\nu'$ has a child in $\dom(\phi)$.  Therefore, $\nu'$ has a child $\nu''$ in $C_n$ since $C_n$ is almost norm-maximizing.  Thus, $\nu''$ and $\nu$ are incomparable.  It follows that $g_n$ and $g$ are disjointly supported- a contradiction.

Since $\nu \in C_n\downarrow$, it follows that $g,g_n \preceq \phi(\nu)$.  Thus, since $\supp(g) = \supp(g_n)$, $g = g_n$. 
\end{proof}

\subsection{Preliminaries from computable analysis}\label{sec:prelim::subsec:CA}
This subsection essentially effectivizes the notions of the previous subsection and makes explicit the computable presentations we will employ in the proofs of our main theorem and its corollary.

The following is from \cite{Pour-El.Richards.1989}.

\begin{theorem}\label{thm:seq.comp.map}
Suppose $\mathcal{B}_0^\#$ and $\mathcal{B}_1^\#$ are computable presentations of Banach spaces 
$\mathcal{B}_0$ and $\mathcal{B}_1$ respectively and that $T : \mathcal{B}_0^\# \rightarrow \mathcal{B}_1^\#$ is bounded and linear.  Then, $T$ is computable if and only if $T$ maps a linearly dense computable sequence of $\mathcal{B}_0^\#$ to a computable sequence of $\mathcal{B}_1^\#$.   
\end{theorem}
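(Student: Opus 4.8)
The plan is to prove the two directions separately, with essentially all the work in the ``if'' direction. For the forward (necessity) direction I would simply observe that the sequence $n \mapsto R_0(n)$, where $R_0$ is the structure of $\mathcal{B}_0^\#$, is a computable sequence of $\mathcal{B}_0^\#$ (each term being a rational vector, uniformly in $n$) whose range is linearly dense by the very definition of a structure. Hence if $T$ is computable, then by definition it carries this computable sequence to a computable sequence of $\mathcal{B}_1^\#$, which is exactly a linearly dense computable sequence with computable image. This direction makes no use of boundedness.

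For the converse, fix a computable linearly dense sequence $\{e_n\}_n$ of $\mathcal{B}_0^\#$ whose image $\{Te_n\}_n$ is a computable sequence of $\mathcal{B}_1^\#$. First I would record the consequence of linearity: for any tuple $\alpha_0, \dots, \alpha_M \in \Q(i)$ the vector $T(\sum_{j \le M}\alpha_j e_j) = \sum_{j \le M}\alpha_j Te_j$ is a computable vector of $\mathcal{B}_1^\#$, uniformly in the $\alpha_j$ and in $M$, since a finite $\Q(i)$-combination of a computable sequence is computable uniformly in its data. Next I would establish an effective density lemma: every computable vector $x$ of $\mathcal{B}_0^\#$ can be approximated, uniformly and to within any prescribed tolerance $2^{-k}$, by a rational $\{e_n\}$-combination $u$. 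This follows from a terminating unbounded search: given a rational-vector approximation $w$ to $x$, one searches over finite $\Q(i)$-combinations $u$ of the $e_n$, using that $u - w$ is a computable vector whose norm can be approximated to arbitrary precision, and halts upon finding one with $\norm{u - w}$ provably below the target. The search terminates precisely because $\{e_n\}$ is linearly dense.

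The remaining and most delicate point is to turn these approximations into a computable image carrying a genuine, computable rate of convergence, and here is where boundedness enters. Since $T$ is bounded, $\norm{T} < \infty$, so I may fix once and for all a rational $C$ with $C > \norm{T}$; this is a legitimate non-uniform constant, and crucially I never need to compute $\norm{T}$ itself. Given any member $x$ of a computable sequence and any $k$, I produce a rational $\{e_n\}$-combination $u_k$ with $\norm{x - u_k} < 2^{-k}$ by the density lemma, compute $Tu_k$ by linearity, and estimate $\norm{Tx - Tu_k} = \norm{T(x - u_k)} \le C\, 2^{-k}$. Thus $Tu_k \to Tx$ at the computable rate $C\, 2^{-k}$, and since the $Tu_k$ are uniformly computable, $Tx$ is computable, uniformly in $x$; hence $T$ is computable. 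I expect the main obstacle to be exactly this rate-of-convergence step: without a computable handle on $\norm{T}$ there is no a priori effective modulus of continuity, and the resolution is the observation that mere finiteness of $\norm{T}$ already supplies a usable rational bound $C$ that may be hard-wired into the algorithm. A secondary point to get right is the termination and uniformity of the search in the density lemma, which rests on the computability of the norm on finite combinations of computable vectors.
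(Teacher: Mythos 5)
Your argument is correct, and it is essentially the standard proof of this fact; the paper itself gives no proof, citing the result to Pour-El and Richards, whose argument for their corresponding theorem proceeds exactly as you do (effective approximation by rational combinations of the given dense sequence, then a hard-wired rational bound $C > \norm{T}$ supplying the computable rate of convergence). Your identification of the non-uniform constant $C$ as the place where boundedness enters, and of the certified-search density lemma as the other load-bearing step, matches the standard treatment; the only bookkeeping to keep straight is that the tolerance in the density lemma should be chosen as roughly $2^{-k}/C$ so that the final output is within $2^{-k}$ of $Tx$, which is routine.
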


\begin{definition}\label{def:compu.comp}
Suppose $\mathcal{B}^\#$ is a computable presentation of a Banach space $\mathcal{B}$, and suppose 
$\mathcal{M}$ is a complemented subspace of $\mathcal{B}$.  We say $\mathcal{M}$ is a 
\emph{computably complemented subspace of $\mathcal{B}$} if $P_\mathcal{M}$ is a 
computable map of $\mathcal{B}^\#$ into $\mathcal{B}^\#$.
\end{definition}

We relativize this notion in the obvious way.

\begin{proposition}\label{prop:proj.comp}
Suppose $\mathcal{B}_j^\#$ is a computable presentation of a Banach space $\mathcal{B}_j$ for 
each $j \in \{0,1\}$, and suppose $T$ is an $X$-computable isometric isomorphism of 
$\mathcal{B}_0^\#$ onto $\mathcal{B}_1^\#$.  If $\mathcal{M}$ is a computably complemented subspace of $\mathcal{B}_0^\#$, then $T[\mathcal{M}]$ is an  $X$-computably complemented subspace of $\mathcal{B}_1^\#$.
\end{proposition}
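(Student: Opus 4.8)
The plan is to compute $P_{T[\mathcal{M}]}$ directly from the factorization $P_{T[\mathcal{M}]} = TP_{\mathcal{M}}T^{-1}$ noted in Section~\ref{sec:back}, reducing everything to the $X$-computability of the three maps being composed. Of these, $T$ is $X$-computable by hypothesis, and $P_{\mathcal{M}}$ is computable (hence $X$-computable) because $\mathcal{M}$ is a computably complemented subspace of $\mathcal{B}_0^\#$. Once $T^{-1}$ is shown to be $X$-computable as well, it follows that the composite $TP_{\mathcal{M}}T^{-1}$ is an $X$-computable map of $\mathcal{B}_1^\#$ into itself, and hence that $T[\mathcal{M}]$ is $X$-computably complemented.

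The crux is therefore to show that $T^{-1}$ is $X$-computable, and here I would appeal to Theorem~\ref{thm:seq.comp.map} in its relativized form. First note that $T^{-1}$ is bounded and linear: since $T$ is an isometry, $\norm{T^{-1}g} = \norm{g}$ for all $g \in \mathcal{B}_1$, so $T^{-1}$ is itself an isometric isomorphism. By the relativized Theorem~\ref{thm:seq.comp.map}, it suffices to exhibit a linearly dense computable sequence of $\mathcal{B}_1^\#$ whose image under $T^{-1}$ is an $X$-computable sequence of $\mathcal{B}_0^\#$. The structure $R_1$ of $\mathcal{B}_1^\#$ furnishes such a linearly dense computable sequence, so it remains to show that $T^{-1}(R_1(n))$ is $X$-computable uniformly in $n$.

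To do this I would use the surjectivity of $T$ together with an unbounded search. Fix $n$ and $k$. Because $T$ is onto and the rational vectors of $\mathcal{B}_0^\#$ are dense, and because $T$ is an isometry, for every $\epsilon > 0$ there is a rational vector $u$ of $\mathcal{B}_0^\#$ with $\norm{T^{-1}(R_1(n)) - u} = \norm{R_1(n) - T(u)} < \epsilon$. Now search over rational vectors $u$ of $\mathcal{B}_0^\#$: for each candidate $u$, the vector $T(u)$ is $X$-computable uniformly from $u$ (as $T$ is $X$-computable and $u$ is rational), so $\norm{R_1(n) - T(u)}$ can be approximated to any desired precision using the computability of the norm of $\mathcal{B}_1^\#$. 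The search locates some $u$ whose approximated distance is certified to lie below $2^{-k}$; by the isometry identity above, such a $u$ is within $2^{-k}$ of $T^{-1}(R_1(n))$. This procedure is uniform in $n$ and $k$ and uses only $X$ as an oracle, so $(T^{-1}(R_1(n)))_n$ is an $X$-computable sequence and $T^{-1}$ is $X$-computable.

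I expect the search-and-halt step to be the main technical point: one must ensure the search terminates, which is exactly where surjectivity of $T$ (guaranteeing a genuine rational approximant exists at each target precision) and the isometry property (transferring the estimate $\norm{R_1(n) - T(u)} < 2^{-k}$ in $\mathcal{B}_1$ back to the estimate $\norm{T^{-1}(R_1(n)) - u} < 2^{-k}$ in $\mathcal{B}_0$) are essential. With $T^{-1}$ established as $X$-computable, composing it with the $X$-computable $T$ and the computable $P_{\mathcal{M}}$ yields the $X$-computability of $P_{T[\mathcal{M}]}$, completing the argument.
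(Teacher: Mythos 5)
Your proof is correct and follows the same route as the paper, whose entire proof is to cite the identity $P_{T[\mathcal{M}]} = TP_{\mathcal{M}}T^{-1}$ and declare the result clear. The additional work you do---verifying that $T^{-1}$ is $X$-computable via the relativized Theorem~\ref{thm:seq.comp.map}, a search over rational vectors, and the isometry identity $\norm{T^{-1}(R_1(n)) - u} = \norm{R_1(n) - T(u)}$---is exactly the standard detail the paper leaves implicit, and you carry it out correctly.
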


\begin{proof}
This is clear from the fact that $P_{T[\mathcal{M}]} = TP_{\mathcal{M}}T^{-1}$.  
\end{proof}

\begin{lemma}\label{lm:disintsArePresentations}
Let $p \geq 1$ be computable.
Suppose $S$ is a tree, and suppose $\phi : S \rightarrow L^p(\Omega)$ is summative and separating.  Suppose also that $\ran(S)$ is linearly dense and that $\nu \mapsto \norm{\phi(\nu)}_p$ is computable.
	Let $R = \phi h$ where $h$ is a computable surjection of $\N$ onto $S$.  Then, $(L^p(\Omega), R)$ is a computable presentation of $L^p(\Omega)$.
\end{lemma}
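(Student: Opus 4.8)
The plan is to first dispose of the easy half: $R=\phi h$ really is a structure on $L^p(\Omega)$, since $h$ maps $\N$ onto $S=\dom(\phi)$, so $\ran(R)=\ran(\phi)$, which is linearly dense by hypothesis. It therefore remains to show that the norm is computable on the rational vectors of $(L^p(\Omega),R)$; that is, to describe an algorithm which, given $\alpha_0,\ldots,\alpha_M\in\Q(i)$ and $k\in\N$, approximates $\norm{\sum_{j\le M}\alpha_j R(j)}_p=\norm{\sum_{j\le M}\alpha_j\phi(h(j))}_p$ to within $2^{-k}$.

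The central idea is to rewrite this finite linear combination as a combination of \emph{pairwise disjointly supported} vectors whose norms can be computed, so that the $L^p$-norm decouples into an $\ell^p$-combination. The naive way to achieve disjointness — pushing each $\phi(\nu)$ down to a common deep antichain via summativity — is exactly the step I expect to be the main obstacle, because a node of $S$ may have infinitely many children and $S$ is only c.e. (it is the range of the computable map $h$), so the children of a node cannot be enumerated effectively. I would avoid this difficulty entirely by working only inside the finite set of nodes actually occurring, whose prefix structure is syntactically decidable.

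Concretely, I would proceed as follows. First collect the distinct nodes $\mu_1,\ldots,\mu_L$ among $h(0),\ldots,h(M)$ (string equality is decidable) together with the combined coefficients $\gamma_l=\sum_{h(j)=\mu_l}\alpha_j$, so the target vector is $g=\sum_l\gamma_l\phi(\mu_l)$. The ancestor relation $\subseteq$ on $F=\{\mu_1,\ldots,\mu_L\}$ is decidable from the strings alone, so for each $l$ I can compute the set $\operatorname{ch}_F(\mu_l)$ of $\subseteq$-maximal proper descendants of $\mu_l$ lying in $F$. Using summativity and separation one shows, by induction on the length difference, that $\kappa\supseteq\mu_l$ implies $\phi(\kappa)\preceq\phi(\mu_l)$ and hence $\phi(\kappa)=\phi(\mu_l)\cdot\chi_{\supp(\phi(\kappa))}$; consequently the residual vector $\rho_l:=\phi(\mu_l)-\sum_{\kappa\in\operatorname{ch}_F(\mu_l)}\phi(\kappa)$ is a subvector of $\phi(\mu_l)$ disjointly supported from each $\phi(\kappa)$. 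A short check (using that incomparable nodes have disjoint supports, while every proper descendant of $\mu_l$ in $F$ lies under exactly one $\kappa\in\operatorname{ch}_F(\mu_l)$) shows the $\rho_{l'}$ are pairwise disjointly supported and that $\phi(\mu_l)=\sum_{l':\,\mu_l\subseteq\mu_{l'}}\rho_{l'}$; interchanging the order of summation then gives $g=\sum_{l'}\delta_{l'}\rho_{l'}$ with $\delta_{l'}=\sum_{l:\,\mu_l\subseteq\mu_{l'}}\gamma_l$.

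Finally I would invoke disjointness of supports twice. Since the $\rho_{l'}$ are disjointly supported, $\norm{g}_p^p=\sum_{l'}|\delta_{l'}|^p\,\norm{\rho_{l'}}_p^p$; and since $\rho_l$ together with the subvectors $\phi(\kappa)$, $\kappa\in\operatorname{ch}_F(\mu_l)$, are disjointly supported and sum to $\phi(\mu_l)$, we obtain $\norm{\rho_l}_p^p=\norm{\phi(\mu_l)}_p^p-\sum_{\kappa\in\operatorname{ch}_F(\mu_l)}\norm{\phi(\kappa)}_p^p$. Every ingredient on the right is computable: the combinatorial data $\delta_{l'}$ and $\operatorname{ch}_F$ comes from the decidable prefix order on $F$, the coefficients are rational, the norms $\norm{\phi(\mu_l)}_p$ and $\norm{\phi(\kappa)}_p$ are computable by hypothesis, and $p$ is a computable real, so the maps $t\mapsto t^p$, $t\mapsto t^{1/p}$, and $\alpha\mapsto|\alpha|^p$ are computable with controllable error. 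Assembling these approximations and propagating errors through the finitely many arithmetic operations yields the required $2^{-k}$-approximation to $\norm{g}_p$, which shows $(L^p(\Omega),R)$ is a computable presentation.
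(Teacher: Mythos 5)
Your proof is correct, and it takes a genuinely different route through the one step that actually requires care. The paper reduces $f=\sum_{\nu\in F}\alpha_\nu\phi(\nu)$ to the leaves $\beta_0,\dots,\beta_k$ of a finite subtree $F$ of $S$ containing the occurring nodes and computes
\[
\norm{f}_p^p=\sum_j\Bigl|\sum_{\nu\subseteq\beta_j}\alpha_\nu\Bigr|^p\norm{\phi(\beta_j)}_p^p,
\]
i.e., it pushes every occurring node down to a common antichain of leaves. That identity presupposes $\supp(f)\subseteq\bigcup_j\supp(\phi(\beta_j))$, which holds only if $F$ is \emph{saturated} (each internal node of $F$ has all of its $S$-children in $F$) — exactly the obstacle you flagged at the outset, since $S$ is only c.e. and a node may have infinitely many children (as in the constructions of Section \ref{sec:lower}), so a saturated finite $F$ need not exist, let alone be computable. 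Your residual decomposition $\rho_l=\phi(\mu_l)-\sum_{\kappa}\phi(\kappa)$ sidesteps this: the leftover mass of each $\phi(\mu_l)$ not covered by the descendants actually present is retained as its own disjointly supported summand with computable norm $\norm{\phi(\mu_l)}_p^p-\sum_\kappa\norm{\phi(\kappa)}_p^p$, so no saturation is needed and the finite node set can be read off syntactically. In this sense your argument does not merely differ from the paper's but supplies the detail its displayed computation glosses over. One small correction: $\operatorname{ch}_F(\mu_l)$ must consist of the $\subseteq$-\emph{minimal} proper descendants of $\mu_l$ in $F$ (its immediate successors in the induced order on $F$), not the maximal ones; otherwise the telescoping identity $\phi(\mu_l)=\sum_{l':\,\mu_l\subseteq\mu_{l'}}\rho_{l'}$ and the uniqueness of the $\kappa$ lying below a given proper descendant both fail. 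With that reading, which is clearly what you intend given the notation and the role these sets play, everything checks out.
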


\begin{proof}
 Since $\ran(\phi)$ is linearly dense, it follows that $R$ is a structure on $L^p(\Omega)$ and that $L^p(\Omega)^\# := (L^p(\Omega),R)$ is a presentation of $L^p(\Omega)$.  
 
 Now we must demonstrate that this presentation is computable.  That is, we must show that the norm function is computable on the rational vectors of $L^p(\Omega)^\#$.  So, suppose $\alpha_0, \ldots, \alpha_M \in \Q(i)$ are given, and let $f = \sum_j \alpha_j R(j)$.  Compute a finite tree $F \subseteq S$ so that $R(j) \in F$ for each $j \leq M$.  
For each $\nu \in F$, let $\alpha_\nu = \sum_{h(j) = \nu} \alpha_j$.  Thus, 
$\sum_j \alpha_j R(j) = \sum_\nu \alpha_\nu \phi(\nu)$.  Let $\beta_0, \ldots, \beta_k$ denote the leaf nodes of $F$.  Thus, $\supp(f) = \bigcup_j \supp(\phi(\beta_j))$.   Therefore, 
\begin{eqnarray*}
\norm{f}_p^p & = & \sum_j \norm{f \cdot \chi_{\supp(\phi(\beta_j))}}_p^p\\
& = & \sum_j \norm{ \left(\sum_{\nu \subseteq \beta_j} \alpha_\nu\right) \phi(\beta_j)}_p^p\\
& = & \sum_j \left| \sum_{\nu \subseteq \beta_j} \alpha_\nu \right|^p \norm{\phi(\beta_j)}_p^p. 
\end{eqnarray*}
Since $\nu \mapsto \norm{\phi(\nu)}_p$ is computable, it follows that $\norm{f}_p$ can be computed from 
$\alpha_0, \ldots, \alpha_M$.
 \end{proof}

\section{Complexity of projection maps}\label{sec:projection}
Here we establish the complexity of projection maps on the spaces $\ell^p_n\oplus L^p[0,1]$ and $\ell^p\oplus L^p[0,1]$ respectively.  The main theorem of this section is the core of our argument yielding the upper bounds for each of the aforementioned spaces.

\begin{theorem}\label{thm:proj.Lp.comp}
Let $p \geq 1$ be a computable real besides $2$. 
Suppose $\mathcal{M}$ is either $\ell^p_n$ or $\ell^p$, and suppose 
$(\mathcal{M} \oplus L^p[0,1])^\#$ is a computable presentation of 
$\mathcal{M} \oplus L^p[0,1]$.  
\begin{enumerate}
	\item If $\mathcal{M} = \ell^p_n$, then $P_{\{\zerovec\} \oplus L^p[0,1]}$ is a 
	$\emptyset'$-computable map of $(\mathcal{M} \oplus L^p[0,1])^\#$ into 
	$(\mathcal{M} \oplus L^p[0,1])^\#$.
	
	\item If $\mathcal{M} = \ell^p$, then $P_{\{\zerovec\}\oplus L^p[0,1]}$ is a 
	$\emptyset''$-computable map of $(\mathcal{M} \oplus L^p[0,1])^\#$ into 
	$(\mathcal{M} \oplus L^p[0,1])^\#$.
\end{enumerate}
\end{theorem}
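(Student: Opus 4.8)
The plan is to reduce the computation of the projection $P_{\{\zerovec\} \oplus L^p[0,1]}$ to the problem of locating the $\preceq$-atoms of the space, which is precisely what the machinery of almost norm-maximizing chains (Theorem \ref{thm:anm.chains} and Theorem \ref{thm:limitsAreAtoms}) is designed to do. By Theorem \ref{thm:disint.comp}, the computable presentation $(\mathcal{M} \oplus L^p[0,1])^\#$ has a computable disintegration $\phi$, and by Theorem \ref{thm:anm.chains} there is a partition $\{C_n\}_{n < \kappa}$ of $\dom(\phi)$ into uniformly c.e.\ almost norm-maximizing chains. Setting $g_n = \inf \phi[C_n]$, Lemma \ref{lm:proj.disint.Lp} gives the explicit formula
\[
P_{\{\zerovec\} \oplus L^p[0,1]} \phi(\nu) = \phi(\nu) - \sum_{g_n \preceq \phi(\nu)} g_n.
\]
Since the range of $\phi$ is linearly dense and $P$ is linear and bounded, by Theorem \ref{thm:seq.comp.map} it suffices to show that the sequence $\nu \mapsto P\phi(\nu)$ is computable relative to the appropriate oracle. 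So the real task is to compute, uniformly in $\nu$, the finite or infinite sum $\sum_{g_n \preceq \phi(\nu)} g_n$.

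First I would observe that each atom $g_n$ is, by Theorem \ref{thm:limitsAreAtoms}(\ref{thm:limitsAreAtoms::itm:inf}), the $L^p$-limit of $\phi(\nu)$ as $\nu$ traverses $C_n$ in increasing order, so $g_n$ is a $Y$-computable vector whenever $C_n$ can be enumerated $Y$-computably with a $Y$-computable handle on the rate of convergence. Since the chains are uniformly c.e., the enumeration is $\emptyset'$-computable. The subtlety is the rate of convergence: along an almost norm-maximizing chain the norms $\norm{\phi(\nu)}_p$ are nonincreasing up to the summable error terms $2^{-|\nu|}$, and one needs these norms to converge to $\norm{g_n}_p$ fast enough to extract a Cauchy sequence with a modulus. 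I expect this to be handled by using the known convergence of the norms together with the $2^{-|\nu|}$ bounds to manufacture an effective modulus, giving the $g_n$ as uniformly $\emptyset'$-computable vectors in the $\ell^p_n$ case.

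The branching between the two cases comes from deciding which atoms satisfy $g_n \preceq \phi(\nu)$, equivalently which chains $C_n$ lie in $C_n\downarrow \ni \nu$, and from the number of terms in the sum. When $\mathcal{M} = \ell^p_n$ there are only finitely many (at most $n$) atoms, so the sum is finite and, after an extra $\emptyset'$ query to determine for each of the finitely many atoms whether $g_n \preceq \phi(\nu)$, the whole expression $\phi(\nu) - \sum_{g_n \preceq \phi(\nu)} g_n$ is $\emptyset'$-computable uniformly in $\nu$; applying Theorem \ref{thm:seq.comp.map} yields part (1). When $\mathcal{M} = \ell^p$ there are infinitely many atoms and the sum is in general infinite, so I must produce an effective $\ell^p$-bound on the tail $\sum_{g_n \preceq \phi(\nu),\ n \geq N} g_n$ in order to approximate $P\phi(\nu)$ to within $2^{-k}$. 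This requires enumerating enough atoms $g_n$ below $\phi(\nu)$ and bounding the remaining mass, which sits at the $\emptyset''$ level: enumerating the chains is $\emptyset'$, and certifying that a tail is small or that a given chain does sit below $\phi(\nu)$ (a co-enumerable condition on incomparability of nodes) pushes the complexity to $\emptyset''$.

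The main obstacle I anticipate is exactly this last point in part (2): controlling the infinite tail effectively. The deciding relation $g_n \preceq \phi(\nu)$ amounts to checking that every node of $C_n$ is comparable with $\nu$ (equivalently $\nu \in C_n\downarrow$), which is naturally a $\Pi^0_1$ condition over a $\emptyset'$-enumeration of the chain, hence $\Pi^0_2$, and simultaneously one must bound $\sum_{n \geq N} \norm{g_n}_p^p$ over those $n$ with $g_n \preceq \phi(\nu)$ to guarantee $L^p$-convergence of the approximations. I would meet this by noting that $\sum_n \norm{g_n \cdot \chi_{\supp(\phi(\nu))}}_p^p \leq \norm{\phi(\nu)}_p^p < \infty$ since the $g_n$ are disjointly supported subvectors of $\phi(\nu)$, so that $\emptyset''$ can locate an $N$ beyond which the omitted atoms contribute less than $2^{-k}$ in $p$-th power norm; combining this tail bound with the $\emptyset'$-enumeration of the finitely many relevant atoms up to stage $N$ gives a $\emptyset''$-computable sequence $\nu \mapsto P\phi(\nu)$, and Theorem \ref{thm:seq.comp.map} then delivers part (2).
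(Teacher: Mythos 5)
Your proposal follows essentially the same route as the paper's proof: a computable disintegration, a partition into uniformly c.e.\ almost norm-maximizing chains, the projection formula of Lemma \ref{lm:proj.disint.Lp} together with the characterization of $g_n \preceq \phi(\nu)$ via $\nu \in C_n\downarrow$, the uniform $\emptyset'$-computability of the atoms $g_n$, and the $\emptyset''$-computable tail bound coming from disjoint supports in the $\ell^p$ case, finishing with Theorem \ref{thm:seq.comp.map}. The only cosmetic differences are that the paper routes the application of Theorem \ref{thm:seq.comp.map} through the auxiliary presentation $(\mathcal{B}, \phi h)$, and that it explicitly isolates the set $D$ of indices of the exactly $n$ nonzero atoms (which is what makes the finite sum $\emptyset'$-locatable in part (1)); note also that, via the downset characterization, membership $g_n \preceq \phi(\nu)$ is actually $\Sigma^0_1$ rather than $\Pi^0_2$, so in part (2) the jump to $\emptyset''$ comes solely from the infinite tail, exactly as you handle it.
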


\begin{proof}
Let $\mathcal{B} = \mathcal{M} \oplus L^p[0,1]$, and let $\phi$ be a computable disintegration of 
$\mathcal{B}^\#$.  Set $S = \dom(\phi)$.  Abbreviate $P_{\{\zerovec\} \oplus L^p[0,1]}$ by $P$.

Let $\mathcal{B}^\# = (\mathcal{B}, R)$.  Fix a computable surjection $h$ of $\N$ onto $S$, and set $R'(j) = \phi(h(j))$.  Let $\mathcal{B}^+ = (\mathcal{B}, R')$.  By Lemma \ref{lm:disintsArePresentations}, $\mathcal{B}^+$ is 
a computable presentation of $\mathcal{B}$.  Furthermore, since $R'$ is a computable sequence of 
$\mathcal{B}^\#$, it follows from Theorem \ref{thm:seq.comp.map} that $\mathcal{B}^\#$ is computably isometrically isomorphic to 
$\mathcal{B}^+$ (namely, by the identity map).   

By Theorem \ref{thm:limitsAreAtoms}, there is a partition $\{C_j\}_{j \in \N}$ of $S$ into almost norm-maximizing chains.  
Let $g_j = \inf \phi[C_j]$.  By Lemma \ref{lm:proj.disint.Lp}, 
\[
P(\phi(\nu)) = \phi(\nu) - \sum_{g_j \preceq \phi(\nu)} g_j.
\]

Let $U_\nu = \{j \in \N\ :\ \nu \in C_j\downarrow\}$.  We first claim that 
\[
\sum_{g_j \preceq \phi(\nu)} g_j = \sum_{j \in U_\nu} g_j.
\]
For, if $j \in U_\nu$, then $g_j \preceq \phi(\nu)$.  Suppose $j \not \in U_\nu$ and $g_j \preceq \phi(\nu)$.  
Let $\mu_0$ be the maximal element of $C_n\downarrow$ so that $\mu_0 \subseteq \nu$.  Thus, 
$\mu_0 \subset \nu$, and so $\mu_0$ has a child $\mu'$ in $S$.  Therefore $g_j \preceq \phi(\mu'), \phi(\nu)$.  
Since $\phi$ is separating, it follows that $g_j = \zerovec$.  

Now, suppose $\mathcal{M} = \ell^p_n$.  We obtain from Theorem \ref{thm:limitsAreAtoms}, that there are exactly $n$ values of $j$ so that $g_j$ is nonzero.  So, let $D = \{j\ : g_j \neq \zerovec\}$.  
Then, $\phi(\nu) - P(\phi(\nu)) = \sum_{j \in U_\nu \cap D} g_j$.  It then follows from Theorem \ref{thm:limitsAreAtoms} that $\{g_j\}_{j \in \N}$ is a $\emptyset'$-computable sequence of $\mathcal{B}^\#$.  
Thus, $\{P(R'(j))\}_{j \in \N}$ is a 
$\emptyset'$-computable sequence of $\mathcal{B}^+$.  Therefore, by the relativization of Theorem \ref{thm:seq.comp.map}, 
$P$ is a $\emptyset'$-computable map of $\mathcal{B}^+$ into $\mathcal{B}^+$.  But, since 
$\mathcal{B}^\#$ is computably isometrically isomorphic to $\mathcal{B}^+$, $P$ is also a $\emptyset'$-computable map of $\mathcal{B}^\#$ into $\mathcal{B}^\#$.

Now, suppose $\mathcal{M} = \ell^p$.  For each $\nu \in S$, let $h_\nu = \phi(\nu) - P(\phi(\nu))$.  
Since $\{g_j\}_{j \in U_\nu}$ is a summable sequence of disjointly supported vectors, 
$\sum_{j \in U_\nu} \norm{g_j}_p^p < \infty$.  Moreover, since $\{g_j\}_{j \in U_\nu}$ is a $\emptyset'$-computable sequence of $\mathcal{B}^\#$, it follows that $\sum_{j \in U_\nu} \norm{g_j}_p^p$ is $\emptyset''$-computable uniformly in $\nu$.  Observe that for each $N \in \N$,
\begin{eqnarray*}
\norm{\sum_{j \in U_\nu \cap [0,N]} g_j - h_\nu}_p^p & = & \norm{\sum_{j \in U_\nu \cap [N+1, \infty)} g_j}_p^p\\
& = & \sum_{j \in U_\nu \cap [N+1, \infty)} \norm{g_j}_p^p.
\end{eqnarray*}  
From this we obtain that $h_\nu$ is a $\emptyset''$-computable vector of $\mathcal{B}^\#$ uniformly in $\nu$.  It then follows that $\{P(R'(j))\}_{j \in \N}$ is a $\emptyset''$-computable sequence of $\mathcal{B}^+$ and so $P$ is a $\emptyset''$-computable map of $\mathcal{B}^+$ into $\mathcal{B}^+$.
\end{proof}

The sharpness of the bounds in Proposition \ref{prop:proj.comp} will be demonstrated in Section \ref{sec:lower}.

\section{Upper bound results}\label{sec:upper}
Here we will use the complexity of projection maps described in the previous section to produce the upper bounds for the degree of categoricity of $\ell^p_n\oplus L^p[0,1]$ and $\ell^p\oplus L^p[0,1]$ respectively.

\begin{theorem}\label{thm:upper}
Suppose $p \geq 1$ is a computable real so that $p \neq 2$.  Then, $\ell^p_n \oplus L^p[0,1]$ is $\emptyset'$-categorical, and $\ell^p \oplus L^p[0,1]$ is $\emptyset''$-categorical.
\end{theorem}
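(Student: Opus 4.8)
The plan is to fix two computable presentations $\B_0^\#$ and $\B_1^\#$ of $\B = \mathcal{M} \oplus L^p[0,1]$ (where $\mathcal{M}$ is $\ell^p_n$ or $\ell^p$) and to produce an isometric isomorphism between them computable from $\Gamma$, where $\Gamma = \emptyset'$ when $\mathcal{M} = \ell^p_n$ and $\Gamma = \emptyset''$ when $\mathcal{M} = \ell^p$. The idea is to use the projection maps to split each presentation into its $\mathcal{M}$-summand and its $L^p[0,1]$-summand, match the two kinds of summand separately, and recombine. Fix computable disintegrations $\phi_i$ of $\B_i^\#$ by Theorem \ref{thm:disint.comp}, and write $P_i = P_{\{\zerovec\}\oplus L^p[0,1]}$ for the projection associated with $\B_i^\#$ and $Q_i = I - P_i$. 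By Theorem \ref{thm:proj.Lp.comp}, $P_i$, and hence $Q_i$, are $\Gamma$-computable. Since the two summands are disjointly supported, every $f \in \B_i$ satisfies $\norm{f}_p^p = \norm{Q_i f}_p^p + \norm{P_i f}_p^p$, so it suffices to build $\Gamma$-computable isometric isomorphisms of the respective summands and take their direct sum.

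For the continuous summand, let $\mathcal{L}_i = \ran(P_i)$. By Lemma \ref{lm:proj.disint}, $P_i \phi_i$ is summative, separating, and linearly dense in $\mathcal{L}_i$, and since $P_i$ is $\Gamma$-computable each $P_i\phi_i(\nu)$ is a $\Gamma$-computable vector with $\Gamma$-computable norm. Thus by the relativization of Lemma \ref{lm:disintsArePresentations} this sequence determines a $\Gamma$-computable presentation of $L^p[0,1]$. Because $L^p[0,1]$ is computably categorical \cite{Clanin.McNicholl.Stull.2019}, relativizing yields a $\Gamma$-computable isometric isomorphism $T_L : \mathcal{L}_0 \to \mathcal{L}_1$.

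For the atomic summand, set $\mathcal{N}_i = \ran(Q_i)$, the closed span of the $\preceq$-atoms of $\B_i$. Here I would deliberately \emph{not} relativize the degree-of-categoricity theorem for $\ell^p$ from \cite{McNicholl.2017}: feeding the $\Gamma$-computable sub-presentation of $\mathcal{N}_i$ into that theorem would incur a further double jump. Instead I would match atoms by hand. Fixing a partition of $\dom(\phi_i)$ into almost norm-maximizing chains $\{C_{i,j}\}_j$ by Theorem \ref{thm:anm.chains}, the infima $g_{i,j} = \inf \phi_i[C_{i,j}]$ form, by Theorem \ref{thm:limitsAreAtoms}, a $\emptyset'$-computable sequence of pairwise disjointly supported vectors whose nonzero members are exactly the atoms of $\B_i$. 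Since $\norm{g_{i,j}}_p$ is the limit of a computable decreasing sequence of norms along $C_{i,j}$, the set $\{j : g_{i,j} \neq \zerovec\}$ is $\Sigma^0_2$ and hence $\emptyset''$-computable; when $\mathcal{M} = \ell^p_n$ this set has known cardinality $n$ and is therefore already $\emptyset'$-computable. In either case I can $\Gamma$-effectively enumerate the nonzero atoms of $\B_0$ as $a_0, a_1, \ldots$ and those of $\B_1$ as $b_0, b_1, \ldots$ and define $T_M(a_m) = (\norm{a_m}_p / \norm{b_m}_p)\, b_m$. Because the $a_m$ are pairwise disjointly supported, $\norm{\sum_m \alpha_m a_m}_p^p = \sum_m |\alpha_m|^p \norm{a_m}_p^p$, and likewise on the image side, so $T_M$ extends to a $\Gamma$-computable isometric isomorphism of $\mathcal{N}_0$ onto $\mathcal{N}_1$.

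Finally, define $T(f) = T_M(Q_0 f) + T_L(P_0 f)$. I would verify that $T$ is a bounded linear bijection of $\B_0$ onto $\B_1$ and, using disjointness of the summands, that $\norm{Tf}_p^p = \norm{T_M Q_0 f}_p^p + \norm{T_L P_0 f}_p^p = \norm{Q_0 f}_p^p + \norm{P_0 f}_p^p = \norm{f}_p^p$, so $T$ is an isometry. Computability of $T$ then follows by applying the relativization of Theorem \ref{thm:seq.comp.map} to the image of the structure of $\B_0^\#$ under $T$, since $P_0, Q_0, T_M, T_L$ are all $\Gamma$-computable; thus $T$ is a $\Gamma$-computable isometric isomorphism of $\B_0^\#$ onto $\B_1^\#$. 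The main obstacle, and the reason the bound for $\ell^p$ is $\emptyset''$ rather than worse, is exactly the point flagged above: one must exploit that the atoms are already $\emptyset'$-computable and that the only $\emptyset''$-cost on the atomic side is the $\Sigma^0_2$ task of recognizing which atoms are nonzero, so that the matching and scaling contribute nothing further and this cost is absorbed by the $\emptyset''$ already incurred by the projection onto the continuous summand in Theorem \ref{thm:proj.Lp.comp}.
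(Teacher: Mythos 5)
Your proposal is correct and follows essentially the same route as the paper: split each presentation via the projection onto the $L^p[0,1]$-summand (whose complexity is given by Theorem \ref{thm:proj.Lp.comp}), handle the continuous part by relativizing the computable categoricity of $L^p[0,1]$ through Lemmas \ref{lm:proj.disint} and \ref{lm:disintsArePresentations}, handle the atomic part via the almost norm-maximizing chains of Theorem \ref{thm:anm.chains} and the atoms supplied by Theorem \ref{thm:limitsAreAtoms}, and glue. The only cosmetic differences are that you map between two arbitrary presentations rather than from the standard one, and you match atoms by hand in the $\ell^p_n$ case as well (where the paper instead invokes the relativized computable categoricity of $\ell^p_n$); both variants are sound and yield the same bounds.
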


\begin{proof}
Suppose $\mathcal{A}$ is either $\ell^p_n$ or $\ell^p$, and let $\mathcal{B} = \mathcal{A} \oplus L^p[0,1]$.  Let $\mathcal{B}^\#$ be a computable presentation of $\mathcal{B}$, and let $\phi$ be a computable disintegration of $\mathcal{B}^\#$.  Let $S = \dom(\phi)$.  Since $\mathcal{B}$ is infinite-dimensional, $S$ is infinite.  Fix a computable surjection $h$ of $\N$ onto $S$.  
Let $\mathcal{M} = \mathcal{A} \oplus \{\zerovec\}$, and let $\mathcal{N} = \{\zerovec\} \oplus L^p[0,1]$.  
In addition, let $P = P_{\mathcal{N}}$.  

We first claim that there is a $\emptyset'$-computable map $T_1 : \mathcal{A} \rightarrow \mathcal{B}^\#$ so that $\ran(T_1) = \mathcal{M}$.  For suppose $\mathcal{A} = \ell^p_n$.   Then, by Theorem \ref{thm:proj.Lp.comp}, $P$ is a $\emptyset'$-computable map of $\mathcal{B}^\#$ into $\mathcal{B}^\#$.  Let $\mathcal{M}^\# = (\mathcal{M}, (I-P)\phi h)$.  By the relativization of Lemma \ref{lm:disintsArePresentations}, $\mathcal{M}^\#$ is a $\emptyset'$-computable presentation of $\mathcal{M}$.  In Section 6 of \cite{McNicholl.2017}, it is shown that $\ell^p_n$ is computably categorical.  So, by relativizing this result, there is a $\emptyset'$-computable isometric isomorphism $T_1$ of $\ell^p_n$ onto $\mathcal{M}^\#$.  
Since $(I - P)\phi h$ is a $\emptyset'$-computable sequence of $\mathcal{B}^\#$, by the relativization of Theorem \ref{thm:seq.comp.map}, $T_1$ is a $\emptyset'$-computable map of $\ell^p_n$ into $\mathcal{B}^\#$.   

Now, suppose $\mathcal{A} = \ell^p$.  By Theorem \ref{thm:anm.chains}, there is a partition $\{C_n\}_{m < \kappa}$ of $S$ into uniformly c.e. almost norm-maximizing chains; since $S$ is infinite, it follows that $\kappa = \omega$.  Let $g_n = \inf \phi[C_n]$.  Then, there is a $\emptyset'$-computable one-to-one enumeration $\{n_k\}_{k = 0}^\infty$ of all $n$ so that $g_n$ is nonzero.  By Theorem \ref{thm:limitsAreAtoms}, for each $j \in \N$, there is a unique $k$ so that $\{j\} = \supp(g_{n_k})$.  Let $T_1$ be the unique linear map of $\ell^p$ into $\mathcal{N}$ so that $T_1(e_k) = \norm{g_{n_k}}_p^{-1} g_{n_k}$ for all $k$.  Since the $g_{n_k}$'s are disjointly supported, it follows that $T_1$ is isometric.   
It follows from the relativization of Theorem \ref{thm:seq.comp.map} that $T_1$ is a $\emptyset'$-computable map of $\ell^p$ into $\mathcal{B}^\#$.

We now claim that if $\mathcal{A} = \ell^p_n$, then there is a $\emptyset'$-computable map $T_2$ of $L^p[0,1]$ into $\mathcal{B}^\#$ so that $\ran(T_2) = \mathcal{N}$.  For, in this case, by Theorem \ref{thm:proj.Lp.comp}, $P$ is a $\emptyset'$-computable map of $\mathcal{B}^\#$ into $\mathcal{B}^\#$.  
Thus, by Lemma \ref{lm:proj.disint}, $\mathcal{N}^\# = (\mathcal{N}, P \phi h)$ is a $\emptyset'$-computable presentation of 
$\mathcal{N}$.  So by the relativization of Theorem 1.1 of \cite{Clanin.McNicholl.Stull.2019}, 
there is a $\emptyset'$-computable isometric isomorphism $T_2$ of $L^p[0,1]$ onto $\mathcal{N}^\#$.  
Since $P \phi h$ is a $\emptyset'$-computable sequence of $\mathcal{B}^\#$, 
$T_2$ is a $\emptyset'$-computable map of $L^p[0,1]$ into $\mathcal{B}^\#$ by the relativization of Theorem \ref{thm:seq.comp.map}.  

It similarly follows that when $\mathcal{A} = \ell^p$, there is a $\emptyset''$-computable map $T_2$ of $L^p[0,1]$ into $\mathcal{B}^\#$ so that $\ran(T_2) = \mathcal{N}$.  

We now form a map $T$ by gluing the maps $T_1$ and $T_2$ together.  Namely, when, $v \in \mathcal{A}$ and $f \in L^p[0,1]$, let $T(v,f) = T_1(v) + T_2(f)$.  Thus, $T$ is an isometric automorphism of $\B$.  If $\mathcal{A} = \ell^p_n$, then $T$ is a $\emptyset'$-computable map of the standard presentation of $\B$ onto $\B^\#$; otherwise it is a $\emptyset''$-computable map between these presentations.   
\end{proof}

\section{Lower bound results}\label{sec:lower}
In each of the following subsections we construct ill-behaved computable presentations for $\ell^p_n\oplus L^p[0,1]$ and $\ell^p\oplus L^p[0,1]$ respectively. We then show that any oracle that computes a linear isometric isomorphism between each constructed presentation and its standard copy must also compute \textbf{d}, where \textbf{d} is any c.e. degree in the $\ell^p_n\oplus L^p[0,1]$ case and $\mathbf{d}$ is the degree of $\emptyset''$ in the $\ell^p\oplus L^p[0,1]$ case.

\subsection{The finitely atomic case}\label{sec:lower::subsec:lpn}

We complete our proof of Theorem \ref{thm:main}.\ref{thm:main::itm:finite} by establishing the following.

\begin{theorem}\label{thm:lpn.lower}
Suppose $p \geq 1$ is computable and $p \neq 2$.  Let $\mathbf{d}$ be a c.e. degree.  Then, there is a computable presentation $(\ell^p_n \oplus L^p[0,1])^\#$ of $\ell^p_n \oplus L^p[0,1]$ so that every degree that computes an isometric isomorphism of $\ell^p \oplus L^p[0,1]$ onto $(\ell^p \oplus L^p[0,1])^\#$ also computes $\mathbf{d}$.
\end{theorem}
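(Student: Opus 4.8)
The plan is to isolate the single feature of the constructed presentation that an isometric isomorphism is forced to recover, namely the location of the atoms, and to show that recovering it decodes a fixed c.e.\ set $A$ of degree $\mathbf{d}$. Throughout write $\mathcal{N} = \{\zerovec\} \oplus L^p[0,1]$. The first and cleanest step is a reduction: it suffices to produce a computable presentation $(\ell^p_n \oplus L^p[0,1])^\#$ in which the projection $P_{\mathcal{N}}$ onto the purely nonatomic part computes $A$. Indeed, in the standard presentation $P_{\mathcal{N}}$ is computable, so $\mathcal{N}$ is a computably complemented subspace there; and because $p \neq 2$, every isometric isomorphism of $\ell^p_n \oplus L^p[0,1]$ preserves disjointness of support, hence preserves $\preceq$, its atoms, and therefore the subspace $\mathcal{N}$ setwise. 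Thus if $T$ is an $X$-computable isometric isomorphism of the standard presentation onto $(\ell^p_n \oplus L^p[0,1])^\#$, then $T[\mathcal{N}] = \mathcal{N}$, and Proposition \ref{prop:proj.comp} makes $P_{\mathcal{N}}$ an $X$-computable map of $(\ell^p_n \oplus L^p[0,1])^\#$ into itself. Granting the construction, $X$ then computes $A$, which is what we want.

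To build the presentation I would specify a computable disintegration $\phi : S \to \ell^p_n \oplus L^p[0,1]$ on a binary tree $S$ and invoke Lemma \ref{lm:disintsArePresentations}, so that the only computability obligation is that $\nu \mapsto \norm{\phi(\nu)}_p$ be computable. Fix once and for all a summative assignment of norms to the nodes of $S$ that is independent of $A$ (so each parent norm is the $p$-sum of its children norms). It suffices to treat $n = 1$, coding into one atom $a$ and placing the remaining $n - 1$ atoms at fixed nodes; the general case only adds honest, $A$-independent data. The idea is to send $a$ down the branch $\chi_A \in \{0,1\}^{\N}$: at stage $s$ the atom is guessed to lie along $\chi_{A_s}$, and when $\ell$ enters $A$ we migrate $a$ across at level $\ell$. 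Each migration is compensated by a swap of continuous mass of equal $p$-norm between the abandoned and the newly occupied nodes, so that every node retains its pre-assigned norm. Since a fixed node is disturbed only when one of its finitely many ancestors enters $A$, each $\phi(\nu)$ changes finitely often; hence $\phi = \lim_s \phi_s$ exists and is a genuine disintegration of $\ell^p_1 \oplus L^p[0,1]$ with exactly one atom, while its node norms are the prescribed computable values.

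Decoding then runs alongside the construction's invariants. Given $P_{\mathcal{N}}$ as an $X$-computable map, descend the atom's branch: at the node $\nu_\ell$ already located at level $\ell$, compute $P_{\mathcal{N}}(\phi(c))$ for each of the two children $c$; by Lemma \ref{lm:proj.disint.Lp} the atom-containing child is the unique one with $P_{\mathcal{N}}(\phi(c)) \neq \phi(c)$ (equivalently $\norm{\phi(c) - P_{\mathcal{N}}(\phi(c))}_p = \norm{a}_p > 0$), and its coordinate is exactly $A(\ell)$. This procedure is $X$-computable and recovers all of $A$, so $X \geq_T A$ and $X$ computes $\mathbf{d}$.

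The main obstacle is the construction of the second step: reconciling a computable (indeed constant) node-norm labeling with an atomic part that is hidden from the norms yet pinned to $\chi_A$, all while the limit remains a bona fide disintegration. Naively revealing or concealing the atom forces a norm jump of size $\norm{a}_p$ at the c.e.\ time $\ell$ enters $A$, which would destroy computability; the continuous-mass-conservation bookkeeping is what removes these jumps, and it must be arranged so that every node that can ever sit on the guessed branch carries enough norm to host $a$ and so that the successive continuous swaps converge in $L^p$. This is exactly the point at which the hybrid structure ``works together'': the finite atomic part supplies the movable marker while the nonatomic summand absorbs the compensating mass, and the whole scheme depends on $p \neq 2$ through the disjointness-rigidity of $L^p$ isometries invoked in the reduction.
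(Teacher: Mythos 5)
Your reduction (first paragraph) and your decoding procedure (third paragraph) are sound, and the reduction is essentially the one the paper uses. The gap is in the construction, and what you flag as ``the main obstacle'' is not a technical difficulty but an impossibility: the location of an atom in a disintegration on a binary tree with computable node norms cannot code a noncomputable set. To see this, suppose $\phi : S \rightarrow \ell^p_1 \oplus L^p[0,1]$ is a disintegration with $S \subseteq \{0,1\}^{<\omega}$ and $\nu \mapsto \norm{\phi(\nu)}_p$ computable (as it must be for $(\mathcal{B}, \phi h)$ to be a computable presentation, since each $\phi(h(j))$ is a rational vector of that presentation). Let $a$ be the atom and fix a rational $\alpha$ with $0 < \alpha < \norm{a}_p^p$. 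Since $\phi$ is summative and separating, $\sum_{|\nu| = \ell} \norm{\phi(\nu)}_p^p \leq \norm{\phi(\emptyset)}_p^p =: N$ for every $\ell$, so $H = \{\nu \in S : \norm{\phi(\nu)}_p^p \geq \alpha\}$ is a downward-closed, co-c.e.\ subtree of $\{0,1\}^{<\omega}$ with at most $N/\alpha$ nodes per level. In your scheme $a \preceq \phi(\nu)$ for every $\nu \subset \chi_A$, hence $\norm{\phi(\nu)}_p^p \geq \norm{a}_p^p > \alpha$ for those $\nu$, so $\chi_A$ is an infinite path of $H$. But $[H]$ is then a $\Pi^0_1$ subclass of Cantor space with at most $N/\alpha$ members, so every member is computable; hence $A$ is computable and nothing has been coded. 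The continuous-mass compensation cannot evade this: it is exactly what keeps the norms computable, and the computability of the norms together with the lower bound $\norm{a}_p$ along the atom's branch is what kills the coding. (Indeed, by the argument in Theorem \ref{thm:limitsAreAtoms} every atom is forced to descend along some branch, and the above shows that branch is always computable.)

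The paper therefore hides a different datum: not the address of an atom but the \emph{norm of its atomic component}. Fix a left-c.e.\ real $\gamma \in (0,1)$ whose left Dedekind cut has degree $\mathbf{d}$ and rationals $q_0 < q_1 < \cdots$ increasing to $\gamma$, and weld the atom $(1-\gamma)^{1/p} e_0$ to a block of continuous mass of measure $\gamma - q_j$ at the node $(0)^{j+1}$, so that $\norm{\phi((0)^{j+1})}_p^p = (1 - \gamma) + (\gamma - q_j) = 1 - q_j$ is computable even though the norm of neither summand is. The atom's branch is the computable all-zeros branch; the hidden information is the single real $(1-\gamma)^{1/p} = \norm{P(\phi((0)))}_p$, where $P$ is the projection onto the span of $(e_0, \zerovec)$. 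Any isometric isomorphism $T$ from the standard copy carries some standard atom to a scalar multiple of $(e_0,\zerovec)$, so by Proposition \ref{prop:proj.comp} an oracle $X$ computing $T$ computes $P$ on the constructed presentation, hence computes $1 - \gamma$ and therefore $\mathbf{d}$. If you replace your movable marker by this welding device, the remainder of your outline (the reduction via preservation of atoms and the extraction via the projection) goes through essentially as you wrote it.
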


Let $\mathcal{B} = \ell^p_n \oplus L^p[0,1]$.  We construct $\mathcal{B}^\#$ as follows.  We first construct a disintegration $\phi$ of $\mathcal{B}$.  Let $\gamma\in(0,1)$ be a left-c.e. real so that the left Dedekind cut of $\gamma$ has Turing degree $\mathbf{d}$.  Let $\{q_n\}$ be a computable and increasing sequence of positive rational numbers so that $\lim_j q_j = \gamma$.  Let $c = 1 - \gamma + q_0$.  Define
\begin{eqnarray*}
a((1)) & = & 1 - c\\
b((1)) & = & 1\\
a({(0)^{j+1}} \cat(1)) & = & \gamma - q_j\\
b({(0)^{j+1}}\cat(1)) & = & \gamma - q_{j-1}.
\end{eqnarray*}
Assuming $a(\nu)$ and $b(\nu)$ have been defined, set $a(\nu\cat(0)) = a(\nu)$, $a(\nu\cat(1)) = b(\nu\cat(0)) = \frac{1}{2} (a(\nu) + b(\nu))$, and $b(\nu\cat(1)) = b(\nu)$.  

Now, let:
\begin{eqnarray*}
\phi(\emptyset) & = & ((1 - \gamma)^{1/p} e_0 + e_1 + \ldots + e_{n-1}, \chi_{[0, 1 - c]} + c^{-1/p}\chi_{[1-c,1]})\\
\phi((0)^{j+1}) & = & ((1 - \gamma)^{1/p}e_0, \chi_{[\gamma - q_j, 1]})\\
\phi((j)) & = & (e_{j-1}, \zerovec)\ \mbox{if $2 \leq j < n$}\\
\phi(\mu) & = & c^{-1/p}(\zerovec, \chi_{[a(\mu), b(\mu)]})\ \mbox{if $(1) \subseteq \mu$}\\
\phi(\mu) & = & (\zerovec, \chi_{[a(\mu), b(\mu)]})\ \mbox{if ${(0)^{j+1}}\cat(1) \subseteq \mu$}
\end{eqnarray*}

\begin{lemma}\label{lm:phi.disint}
$\phi$ is a disintegration of $\mathcal{B}$.
\end{lemma}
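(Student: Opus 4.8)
The plan is to verify directly the five conditions in the definition of a disintegration: that $\phi$ is non-vanishing, summative, separating, injective, and has linearly dense range. First I would fix the combinatorial picture of $S = \dom(\phi)$. The tree has three parts. There is a \emph{spine} $(0), (0)^2, (0)^3, \dots$ along which each $\phi((0)^m)$ carries the fixed $\ell^p_n$-component $(1-\gamma)^{1/p}e_0$ together with an $L^p$-tail whose support shrinks to a null set as $m \to \infty$ (this shrinking is forced by summativity, since each deeper node must be a subvector of its predecessor). There is a family of \emph{dyadic subtrees}, one hanging off $(1)$ and one off each branch node $(0)^{j+1}\cat(1)$, on which $\phi$ is a fixed scalar multiple of the standard dyadic presentation of $L^p$ on the base interval $[a(\mu),b(\mu)]$. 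Finally there are finitely many terminal \emph{leaves} with $\phi(\nu) = (e_{j-1},\zerovec)$ carrying $e_1,\dots,e_{n-1}$. The recursion $a(\nu\cat(0)) = a(\nu)$, $a(\nu\cat(1)) = b(\nu\cat(0)) = \tfrac{1}{2}(a(\nu)+b(\nu))$, $b(\nu\cat(1)) = b(\nu)$ is exactly dyadic bisection, so within each subtree the intervals $[a(\mu),b(\mu)]$ behave like ordinary dyadic subintervals; moreover the base intervals $[1-c,1]$ and the intervals attached to the spine branch nodes tile $[0,1]$.

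Non-vanishing is immediate, since every $\phi(\nu)$ has either a nonzero $\ell^p_n$-coordinate or a characteristic function of a nondegenerate interval, nondegeneracy following from $\{q_j\}$ being strictly increasing and from bisection preserving positive length. For summativity I would check the three node types separately. At an internal node of a dyadic subtree, the identity $\phi(\nu) = \phi(\nu\cat(0)) + \phi(\nu\cat(1))$ reduces to $\chi_{[a,b]} = \chi_{[a,\frac{1}{2}(a+b)]} + \chi_{[\frac{1}{2}(a+b),b]}$ scaled by the common constant, which is trivial. At a spine node $(0)^m$ and at the root the verification is a telescoping computation: the $\ell^p_n$-coordinates add up by inspection, while the $L^p$-coordinates add up because the base intervals of the descendant subtrees tile the parent's $L^p$-support; the scalar $c^{-1/p}$ on the $(1)$-subtree is matched by the term $c^{-1/p}\chi_{[1-c,1]}$ appearing in $\phi(\emptyset)$.

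Separating then follows from the geometry just described: two incomparable nodes either lie in distinct dyadic subtrees (disjoint base intervals), or are incomparable dyadic nodes inside one subtree (disjoint dyadic subintervals), or one of them is a leaf whose support is a single $\ell^p_n$-coordinate $e_{j-1}$ meeting nothing else. Crucially, the coordinate $e_0$ occurs only along the spine, and the spine is linearly ordered, so $e_0$ never creates a clash between incomparable nodes. Injectivity is then a consequence: incomparable nodes have disjoint supports and are nonzero, hence distinct; and for comparable nodes $\nu \subsetneq \nu'$ summativity gives $\phi(\nu') \preceq \phi(\nu)$ with $\phi(\nu) - \phi(\nu') \neq \zerovec$ (a nonzero sibling subvector is split off), so again $\phi(\nu) \neq \phi(\nu')$.

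Finally, for linear density I would show that the closed linear span of $\ran(\phi)$ contains both $\ell^p_n \oplus \{\zerovec\}$ and $\{\zerovec\} \oplus L^p[0,1]$. The vectors $e_1,\dots,e_{n-1}$ appear directly as values at the leaves, and $e_0$ is recovered, up to the nonzero factor $(1-\gamma)^{1/p}$, as the $L^p$-limit of the spine vectors $\phi((0)^m)$, which exists and equals $((1-\gamma)^{1/p}e_0,\zerovec)$ by Proposition \ref{prop:subvectorLimitsExist}. For the $L^p[0,1]$ summand, within each base interval the associated subtree intervals form, after an affine rescaling, the full system of dyadic subintervals, whose characteristic functions have dense span in $L^p$ of that interval; summing over the tiling base intervals yields density in $L^p[0,1]$. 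The main obstacle I anticipate is precisely this density argument together with the telescoping step in summativity: because the interval endpoints of the form $\gamma - q_j$ are not dyadic, one cannot represent an arbitrary dyadic subinterval of $[0,1]$ exactly and must instead approximate, while carefully tracking the scaling constant $c^{-1/p}$ and the tiling so that the pieces reassemble into the standard $L^p[0,1]$ structure.
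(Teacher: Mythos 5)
Your proof is correct and takes essentially the same route as the paper: the paper likewise dispatches non-vanishing, summativity, separation, and injectivity as immediate from the construction, and concentrates on linear density, obtaining $(e_j,\zerovec)$ for $1\leq j\leq n-1$ from the leaves, $(e_0,\zerovec)$ as the $L^p$-limit along the spine, and $(\zerovec,\chi_I)$ from the fact that the supports of the level-$j$ vectors partition $[0,1]$ with mesh tending to $0$. (Your reading of the spine supports as $[0,\gamma-q_j]$ is the intended one, as the paper's own density computation confirms.)
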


\begin{proof}
By construction, $\phi$ is summative, separating, injective, and never zero.  It only remains to show that $\ran(\phi)$ is linearly dense.  By construction, $(e_j, \zerovec) \in ran(\phi)$ when $1 \leq j \leq n - 1$.  So, it is enough to show that $(e_0,\zerovec) \in \langle \ran(\phi) \rangle$ and that $(0,\chi_I) \in \langle \ran(\phi)\rangle$ for every closed interval $I \subseteq [0,1]$.  
		
Let $\epsilon >0$ be given.  There is a $K\in \N$ so that $|\gamma-q_K|/|1 - \gamma| < \epsilon^p$.  Furthermore, 
	\begin{align*}
		\norm{(e_0, \textbf{0}) - \frac{1}{(1-\gamma)^{1/p}} \phi((0)^{K+1})}_p^p&=\norm{(\zerovec,-\frac{1}{(1-\gamma)^{1/p}}\chi_{[0,(\gamma - q_K)]})}_p^p\\
		&=\frac{|\gamma-q_K|}{|1-\gamma|}\\
		&<\epsilon^p
	\end{align*}
	Therefore $(e_0,\zerovec) \in \langle \ran(\phi)\rangle$.
	
Let $\mathcal{M} = \{\zerovec\} \oplus L^p[0,1]$, and let $E(f) = \supp(P_\mathcal{M}(f))$.  
By construction, for each $f \in \ran(\phi)$, $\chi_{E(f)}$ belongs to the linear span of $\ran(\phi)$.  By induction, 
\[
\bigcup_{|\nu| = j} E(\phi(\nu)) = [0,1].
\]	
Since $\phi$ is separating, $\{E(\phi(\nu))\}_{|\nu| = j}$ is a partition of $[0,1]$.  Let $L_\nu$ denote the 
length of $E(\phi(\nu))$.  Then, by construction, $\lim_j \max_{|\nu| = j} L_\nu = 0$.  
It follows that if $I \subseteq [0,1]$ is a closed interval, then $\chi_I$ belongs to the closed linear span of $\ran(\phi)$.  
\end{proof}

\begin{lemma}\label{lm:norm.phi.comp}
$\nu \mapsto \norm{\phi(\nu)}_p$ is computable.
\end{lemma}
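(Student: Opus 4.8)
The plan is to prove the stronger statement that $\norm{\phi(\nu)}_p^p$ is a rational number that can be computed uniformly from $\nu$; since $p$ is computable and $x \mapsto x^{1/p}$ is then computable on the nonnegative computable reals, the computability of $\nu \mapsto \norm{\phi(\nu)}_p$ follows. I would use that the norm on $\mathcal{B} = \ell^p_n \oplus L^p[0,1]$ splits as $\norm{(v,f)}_p^p = \norm{v}_{\ell^p_n}^p + \norm{f}_{L^p[0,1]}^p$, so that $\norm{\phi(\nu)}_p^p$ is the sum of the $p$-th powers of the $\ell^p_n$-coordinate weights of $\phi(\nu)$ together with the Lebesgue lengths, weighted by the appropriate powers of $c^{-1/p}$, of the dyadic intervals carrying its $L^p$-coordinate. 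Each of these ingredients is read directly off the definition of $\phi$. The only obstruction to computability is that several of them individually involve $\gamma$, which is merely left-c.e.; the heart of the matter is that within each clause of the definition the $\gamma$-terms cancel, leaving an expression built from the rationals $q_0, q_1, \dots$, the integer $n$, and a power of $2$.

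Given $\nu$, one first decides computably which clause applies---whether $\nu$ is empty, a nonempty string of zeros $(0)^{j+1}$, a length-one string $(j)$ with $2 \le j < n$, an extension of $(1)$, or an extension of some $(0)^{j+1}\cat(1)$---reading off the relevant parameters $j$ and $|\nu|$ as needed. Three of these clauses reduce to a cancellation internal to a single coordinate. For $\phi((j))$ with $2 \le j < n$ one has $\norm{\phi((j))}_p^p = 1$. When $(1) \subseteq \mu$, the bisection rule for $a$ and $b$ gives $b(\mu) - a(\mu) = c\, 2^{-(|\mu|-1)}$, so the factor $\left(c^{-1/p}\right)^p = c^{-1}$ cancels the leading $c$ and $\norm{\phi(\mu)}_p^p = 2^{-(|\mu|-1)}$, which is independent of $c$ and hence of $\gamma$. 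When $(0)^{j+1}\cat(1) \subseteq \mu$, both endpoints of the governing interval are of the form $\gamma$ minus a rational, so $\gamma$ cancels in the length $b(\mu) - a(\mu)$, which is therefore a computable dyadic multiple of a difference of two of the $q_i$.

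The two remaining clauses are the crux, since there $\gamma$ appears genuinely in both summands and must cancel across the two coordinates. For $\phi(\emptyset)$ the $\ell^p_n$-part contributes $(1-\gamma) + (n-1) = n - \gamma$, while the $L^p$-part contributes $\gamma - q_0$ from $\chi_{[0,1-c]}$ (whose length is $1 - c = \gamma - q_0$) together with $c\cdot c^{-1} = 1$ from the scaled piece $c^{-1/p}\chi_{[1-c,1]}$; summing, the $\gamma$'s cancel and $\norm{\phi(\emptyset)}_p^p = n + 1 - q_0$. The principal and analogous case is the chain $\phi((0)^{j+1})$, where the weight $(1-\gamma)^{1/p}$ on $e_0$ contributes $1-\gamma$ while the length of the $L^p$-interval is arranged so that its $\gamma$-term exactly offsets this $-\gamma$, leaving a rational depending only on $q_j$ and uniform in $j$. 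This inter-component cancellation along the chain that approximates the $e_0$-atom is the step I expect to be the main obstacle: it is precisely where the construction conceals $\gamma$, and a termwise evaluation that did not exploit the cancellation would yield a right-c.e. but non-computable quantity. Collecting the five cases shows $\nu \mapsto \norm{\phi(\nu)}_p^p$ is rational-valued and computable, and passing to $p$-th roots finishes the argument.
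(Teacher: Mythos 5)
Your proposal is correct and follows essentially the same route as the paper: a case split over the five clauses defining $\phi$, an explicit evaluation of $\norm{\phi(\nu)}_p^p$ in each case using the splitting of the norm across the two summands, and the observation that the occurrences of $\gamma$ (and of $c$) cancel, leaving rational values such as $n+1-q_0$, $1-q_j$, $2^{-|\mu|+1}$, and $2^{-|\mu|+j+2}(q_j-q_{j-1})$ that are computable uniformly in $\nu$. Your added commentary on why the inter-component cancellation along the $(0)^{j+1}$ chain is the crux matches the computation the paper records.
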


\begin{proof}
We have:
\begin{eqnarray*}
\norm{\phi(\emptyset)}_p^p & = & n+1 - q_0\\
\norm{\phi((0)^{j+1})}_p^p&  = & 1 - q_j\\
\end{eqnarray*}
If $2 \leq j < n$, then $\norm{\phi((j))}_p^p = 1$.  If $(1) \subseteq \mu$, then 
$\norm{\phi(\mu)}_p^p = c^{-1} c (b(\mu) - a(\mu)) = 2^{-|\mu| + 1}$.  
Moreover, if ${(0)^{j+1}}\cat(1) \subseteq \mu$, then 
$\norm{\phi(\mu)}_p^p = b(\mu) - a(\mu) = 2^{-|\mu| + j + 2}(q_j - q_{j-1})$.  
Thus, $\nu \mapsto \norm{\phi(\nu)}_p$ is computable.
\end{proof}

Therefore, $\mathcal{B}^\#$ is a computable presentation of $\mathcal{B}$ by Lemma \ref{lm:disintsArePresentations}.

\begin{lemma}\label{lm:proj}
If the projection $P_{\langle e_0 \rangle \oplus \{\zerovec\}}$ is an $X$-computable map of 
$\mathcal{B}^\#$ into $\mathcal{B}^\#$, then 
$X$ computes $\mathbf{d}$.
\end{lemma}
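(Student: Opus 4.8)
The plan is to apply the hypothesized $X$-computable projection to the single distinguished structure vector $\phi(\emptyset)$ and to read off the left Dedekind cut of $\gamma$ from the norm of the resulting vector. The key observation is that $P_{\langle e_0 \rangle \oplus \{\zerovec\}}$ merely extracts the $e_0$-coordinate: for $(v,f) \in \mathcal{B}$ with $v = (v_0, \ldots, v_{n-1})$ we have $P_{\langle e_0 \rangle \oplus \{\zerovec\}}(v,f) = (v_0 e_0, \zerovec)$. Applying this to
\[
\phi(\emptyset) = ((1 - \gamma)^{1/p} e_0 + e_1 + \cdots + e_{n-1},\ \chi_{[0, 1-c]} + c^{-1/p}\chi_{[1-c,1]})
\]
isolates the coefficient $(1-\gamma)^{1/p}$, yielding $P_{\langle e_0 \rangle \oplus \{\zerovec\}}(\phi(\emptyset)) = ((1-\gamma)^{1/p} e_0, \zerovec)$, whose $p$-th power of the norm is exactly $1 - \gamma$.

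First I would note that $\phi(\emptyset)$ is a structure vector of $\mathcal{B}^\#$ (it equals $R(m)$ for any $m$ with $h(m) = \emptyset$), hence a computable vector. Writing $P = P_{\langle e_0 \rangle \oplus \{\zerovec\}}$ and assuming $P$ is $X$-computable, it follows that $P(\phi(\emptyset))$ is an $X$-computable vector of $\mathcal{B}^\#$. Since $\mathcal{B}^\#$ is a computable presentation, its norm is computable on rational vectors, and the norm of an $X$-computable vector is therefore $X$-computable; thus $\norm{P(\phi(\emptyset))}_p = (1-\gamma)^{1/p}$ is an $X$-computable real. Because $p$ is a computable real and $(1-\gamma)^{1/p}$ lies in $(0,1)$ bounded away from $0$, the map $t \mapsto t^p$ is computable there, so $1 - \gamma = ((1-\gamma)^{1/p})^p$ is $X$-computable, and hence $\gamma$ is an $X$-computable real.

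It then remains to pass from $X$-computability of $\gamma$ as a real to $X$-computability of its left Dedekind cut $L_\gamma$, whose degree is $\mathbf{d}$. If $\mathbf{d} = \mathbf{0}$ there is nothing to prove, so I would assume $\mathbf{d} \neq \mathbf{0}$; then $L_\gamma$ is noncomputable, which forces $\gamma$ to be irrational, since a rational real has computable left cut. For irrational $\gamma$, given a rational $q$ one can $X$-computably decide whether $q < \gamma$ by computing approximations $\gamma_k$ with $|\gamma - \gamma_k| < 2^{-k}$ and waiting until $|\gamma_k - q| > 2^{-k}$, which must eventually occur because $q \neq \gamma$; the sign of $\gamma_k - q$ then decides membership in $L_\gamma$. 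Hence $L_\gamma \leq_T X$, i.e.\ $X$ computes $\mathbf{d}$.

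The computation itself is short, and the genuine content is entirely in the two identifications above: that $P$ reads off the coefficient $(1-\gamma)^{1/p}$, and that $X$-computability of this real delivers $\gamma$. The only real subtlety is the rational/irrational dichotomy in the final step, which I dispatch by treating $\mathbf{d} = \mathbf{0}$ trivially and observing that the nontrivial case forces $\gamma$ to be irrational, so that its left cut becomes genuinely $X$-\emph{decidable} rather than merely $X$-left-c.e.
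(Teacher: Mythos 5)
Your proof is correct and follows essentially the same route as the paper's: apply the hypothesized $X$-computable projection to a computable structure vector whose $e_0$-coordinate is $(1-\gamma)^{1/p}$, recover $\gamma$ from the norm, and conclude that $X$ computes $\mathbf{d}$. The only differences are cosmetic — you use $\phi(\emptyset)$ where the paper uses $\phi((0))$, and you spell out the passage from $X$-computability of the real $\gamma$ to $X$-computability of its left cut, which the paper leaves implicit.
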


\begin{proof}
Let $P = P_{\langle e_0 \rangle \oplus \{\zerovec\}}$.  Suppose $P$ is an $X$-computable map of 
$\mathcal{B}^\#$ into $\mathcal{B}^\#$.  Let $f = \phi((0))$.  Thus, $f$ is a computable vector of $\mathcal{B}^\#$, and so $X$ computes $(1 - \gamma)^{1/p} = \norm{P(f)}_p$.  Therefore, $X$ computes $\gamma$ and so $X$ computes $\mathbf{d}$.  
\end{proof}

Suppose $X$ computes an isometric isomorphism $T$ of $\mathcal{B}$ onto 
$\mathcal{B}^\#$.  Since $T$ preserves the subvector ordering, there is a $j_0$ so that $T((e_{j_0}, \zerovec))$ is a nonzero scalar multiple of $(e_0, \zerovec)$.  Let $\mathcal{M} = \langle (e_{j_0}, \zerovec) \rangle$.  
Then, $T[\mathcal{M}] = \langle (e_0, \zerovec) \rangle$.  Let $P = P_{T[\mathcal{M}]}$.  By Theorem \ref{thm:proj.Lp.comp}, $P$ is an $X$-computable map of $\mathcal{B}^\#$ into itself.  
Let $f = \phi((0))$.  Thus, $f$ is a computable vector of $\mathcal{B}^\#$.  Hence, 
$\norm{P(f)}_p = (1 - \gamma)^{1/p}$ is $X$-computable.  Therefore, $X$ computes $\gamma$, and so 
$X$ computes $\mathbf{d}$.

Note that we have also established the sharpness of the bound in Theorem \ref{thm:proj.Lp.comp}.

\subsection{The infinitely atomic case}\label{sec:lower::subsec:lp}

We complete our proof of Theorem \ref{thm:main} by proving the following.

\begin{theorem}\label{thm:lpLp01.lower}
Suppose $p \geq 1$ is computable and $p \neq 2$.  
There is a computable presentation $\mathcal{B}^\#$ of $\ell^p \oplus L^p[0,1]$ so that 
every oracle that computes an isometric isomorphism of $\ell^p \oplus L^p[0,1]$ onto $\mathcal{B}^\#$ also  computes $\emptyset''$.  
\end{theorem}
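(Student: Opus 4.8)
The plan is to construct, by one explicit diagonalization, a computable presentation $\mathcal{B}^\#$ of $\mathcal{B} = \ell^p \oplus L^p[0,1]$ for which recovering any isometric isomorphism forces the oracle to decide a $\Sigma^0_2$-complete set, and to pair this with a reduction that converts such an isomorphism into the projection onto the $L^p[0,1]$-summand. Since Theorem \ref{thm:upper} already gives that $\mathcal{B}$ is $\emptyset''$-categorical, this pins the degree of categoricity at exactly $\emptyset''$ and simultaneously establishes the sharpness of Theorem \ref{thm:proj.Lp.comp}(2). The reduction is the infinite analogue of the argument closing the finitely atomic case. Let $\mathcal{N} = \{\zerovec\}\oplus L^p[0,1]$ and $P = P_{\mathcal{N}}$. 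Because $p \neq 2$, any isometric isomorphism $T$ of the standard presentation onto $\mathcal{B}^\#$ carries $\preceq$-atoms to $\preceq$-atoms, hence carries the closed span of the atoms onto itself and therefore carries $\mathcal{N}$ onto the $L^p[0,1]$-summand of $\mathcal{B}^\#$. In the standard presentation the coordinate projection onto $\mathcal{N}$ is computable, so by (the relativization of) Proposition \ref{prop:proj.comp}, any oracle $X$ computing $T$ also computes $P$ as a map of $\mathcal{B}^\#$ into itself. Thus it suffices to build $\mathcal{B}^\#$ so that every oracle computing $P$ computes $\emptyset''$.

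Next I would design the disintegration. Following the template of Theorem \ref{thm:lpn.lower}, I would build a computable disintegration $\phi$ of $\mathcal{B}$ whose range is linearly dense, whose node-norms $\nu \mapsto \norm{\phi(\nu)}_p$ are computable (so $\mathcal{B}^\#$ is computable by Lemma \ref{lm:disintsArePresentations}), and which has infinitely many nonzero atoms $g_k = \inf\phi[C_k]$, so that the $\ell^p$-summand is faithfully presented. The new feature is a coding scheme. Fix a $\Sigma^0_2$-complete set, say $\Fin = \{e : W_e \text{ is finite}\}$, partition the atoms into uniformly computable infinite families $\{g_{e,s}\}_{s}$, one per $e$, and use the finite-case entanglement device (a uniformly left-c.e.\ splitting of each atom's mass between its $\ell^p$-slot and a dedicated subinterval of $[0,1]$, driven by $W_e$) to arrange a fixed rational gap: the aggregate atomic mass $M_e = \sum_s \norm{g_{e,s}}_p^p$ lies in $[\delta, 2\delta]$ when $W_e$ is finite and in $[0, \delta/2]$ when $W_e$ is infinite. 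Concretely, each element entering $W_e$ should permanently migrate a fixed chunk of the accumulated mass of family $e$ out of its atoms and into $[0,1]$, so that only finitely many injections leave the atomic mass above the gap. Verifying that this $\phi$ is a genuine disintegration presenting $\ell^p \oplus L^p[0,1]$ — linear density, the correct infinite atom count, an intact nonatomic part, and computability of the node-norms — proceeds exactly as in Lemmas \ref{lm:phi.disint} and \ref{lm:norm.phi.comp}.

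Finally I would extract $\emptyset''$. By Lemma \ref{lm:proj.disint.Lp} we have $(I-P)\phi(\nu) = \sum_{g_k \preceq \phi(\nu)} g_k$, so choosing for each $e$ a computable vector $f_e$ lying above exactly the family $\{g_{e,s}\}_s$ (an appropriate $\phi(\nu)$, or a computable combination isolating that family) gives $\norm{(I-P)f_e}_p^p = M_e$. An oracle $X$ computing $P$ therefore computes each $M_e$ to within $\delta/4$, uniformly in $e$, and the gap lets it decide whether $e \in \Fin$; hence $X$ computes $\Fin \equiv_T \emptyset''$. Combined with the reduction above, every oracle computing an isometric isomorphism of $\ell^p\oplus L^p[0,1]$ onto $\mathcal{B}^\#$ computes $\emptyset''$, as required.

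I expect the main obstacle to be the gap-engineering in the third paragraph. The individual atom masses are only right-c.e.\ (limit-computable), so their infinite sums $M_e$ are merely $\emptyset''$-level reals, and the difficulty is to make the mass-migration robust enough that a single jump across the fixed rational gap faithfully records the $\Sigma^0_2$ event ``$W_e$ finite'' no matter when or how elements enter $W_e$ — all while keeping $\phi$ summative, separating, injective, and linearly dense, and the node-norms computable. The bookkeeping that simultaneously preserves the disintegration axioms and the gap is the technical heart of the construction.
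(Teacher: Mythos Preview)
Your overall framework matches the paper's: reduce computing an isometric isomorphism $T$ to computing the projection $P = P_{\{\zerovec\}\oplus L^p[0,1]}$ via Proposition~\ref{prop:proj.comp} (the paper packages the needed fact that $T$ must carry the atomic and nonatomic parts to each other as a separate Lemma~\ref{lm:decomp}), and then build $\mathcal{B}^\#$ so that $P$ decides $\Fin$. Where you diverge is in the coding mechanism, and the divergence creates the very obstacle you flag at the end --- one the paper sidesteps entirely.

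The difficulty with your mass-migration scheme is the uniform gap. You want $M_e \in [\delta, 2\delta]$ when $W_e$ is finite and $M_e \in [0,\delta/2]$ when $W_e$ is infinite, for a fixed $\delta$. But ``$W_e$ finite'' places no bound on $\#W_e$: if each new element of $W_e$ migrates a fixed positive chunk out of the atoms, a large finite $W_e$ already drives $M_e$ below $\delta$; if instead the chunks are summable, an infinite $W_e$ need not drive $M_e$ below $\delta/2$. No monotone migration driven solely by the enumeration of $W_e$ can separate ``finite'' from ``infinite'' by a gap independent of $e$. You could repair this by letting the gap depend computably on $e$, but once you do that the whole migration apparatus is unnecessary.

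The paper's construction is cleaner and avoids this. For each $e$ it plants a vector $\phi((e))$ of norm exactly $2^{-(e+1)}$ and then splits it in perfect binary fashion --- each child carrying exactly half the $p$-th-power norm of its parent --- for $m_e = \#W_e$ levels. If $W_e$ is finite the subtree below $(e)$ terminates and $\phi((e))$ is a sum of $2^{m_e}$ atoms, so $P(\phi((e))) = \zerovec$; if $W_e$ is infinite the subtree is the full binary tree, $\phi((e))$ sits inside a copy of $L^p[0,1]$, and $P(\phi((e))) = \phi((e))$. Because every split is exactly in half, the node norms are explicit dyadic rationals regardless of outcome, so computability of the presentation is immediate, and the test is $\norm{P(\phi((e)))}_p^p = 0$ versus $2^{-(e+1)}$ --- a gap that depends on $e$ but is computable from $e$, which is all one needs. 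The idea you are missing is that one can encode $\Fin$ not by the \emph{amount} of atomic mass below a node but by whether there is \emph{any} nonatomic mass at all, i.e., by whether the binary splitting below $(e)$ halts or runs forever.
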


We construct $\mathcal{B}$ as follows.  Let 
\[
m_e = \left\{\begin{array}{ll}
\# W_e & \mbox{\ if $e \in \Fin$;}\\
\omega & \mbox{\ otherwise.}\\
\end{array}
\right.
\]
For each $e \in \N$, let 
\[
\mathcal{B}_e = \left\{ \begin{array}{ll}
\ell^p_{2^{m_e}} & \mbox{\ if $e \in \Fin$;}\\
L^p[0,1] & \mbox{\ otherwise.}\\
\end{array}
\right.
\]
Let $\mathcal{B}$ be the $L^p$ sum of $\{\mathcal{B}_e\}_{e \in \N}$.  
Let $\iota_e$ be the natural injection of $\mathcal{B}_e$ into $\mathcal{B}$.

We now build a presentation of $\mathcal{B}$ via the construction of a disintegration $\phi$ of $\mathcal{B}$.  Let
\[
S = \omega^{\leq 1}\ \cup\ \{(e)\cat\alpha\ :\ \alpha \in \{0,1\}^{< m_e}\}.
\]
Thus, $S$ is c.e..
Let 
\[
g_e = \left\{\begin{array}{ll}
2^{-m_e/p} \sum_{j < 2^{m_e}} e_j & \mbox{\ if $e \in \Fin$;}\\
\chi_{[0,1]} & \mbox{\ otherwise.}\\
\end{array}
\right.
\]
Let $f_e = \iota_e(g_e)$.  For each $e$ we let $\phi((e)) = 2^{-(e+1)}f_e$.  

For each $\nu \in S - \{\emptyset\}$, we recursively define a set $I(\nu)$ as follows.  
For each $e \in \N$, let 
\[
I((e)) = \left\{ \begin{array}{ll}
		\{0, \ldots, 2^{m_e} - 1\} & \mbox{\ if $e \in \Fin$;}\\ \relax
		[0,1] & \mbox{\ otherwise.}\\
		\end{array}
		\right.
\] 
Suppose $\nu \in S$ and $I(\nu)$ has been defined.  Let $a(\nu) = \min I(\nu)$, and let $b(\nu) = \max I(\nu)$.
Let $e = \nu(0)$.  If $e \not \in \Fin$, let: 
\begin{eqnarray*}
I(\nu\cat(0)) & = & [a(\nu), 2^{-1}(a(\nu) + b(\nu))]\\
I(\nu \cat(1)) & = & [2^{-1}(a(\nu) + b(\nu)), b(\nu)]
\end{eqnarray*}
If $e \in \Fin$, and if $|\nu| + 1 < m_e$, let:
\begin{eqnarray*}
I(\nu\cat(0)) & = & \{a(\nu), \ldots, a(\nu) + \frac{1}{2}\#I(\nu) - 1\}\\
I(\nu\cat(1)) & = & \{ \frac{1}{2}\#I(\nu), \ldots, b(\nu)\}
\end{eqnarray*}

When $\nu \in S$, let 
\[
\phi(\nu) = \left\{
\begin{array}{ll}
\sum_e 2^{-(e+1)} f_e & \mbox{\ if $\nu = \emptyset$;}\\
2^{-(\nu(0)+1)}f_{\nu(0)} \cdot \chi_{I(\nu)} & \mbox{\ otherwise.}\\
\end{array}
\right.
\]
Let $h$ be a computable surjection of $\N$ onto $S$, and let $\mathcal{B}^\# = (\mathcal{B}, \phi h)$.  

We divide the verification of our construction into the following lemmas.  Let 
$U = \sum_{e \in \Fin} \iota_e(\mathcal{B}_e)$, and let $V = \sum_{e \not \in \Fin} \iota_e(\mathcal{B}_e)$.  

\begin{lemma}\label{lm:isom.Lp}
$\mathcal{B}$ is isometrically isomorphic to $\ell^p \oplus L^p[0,1]$.  
\end{lemma}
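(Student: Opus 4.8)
The plan is to exploit the decomposition $\mathcal{B} = U \oplus V$ suggested by the definitions of $U$ and $V$, identifying each summand separately: I would show that $U$ is isometrically isomorphic to $\ell^p$ and that $V$ is isometrically isomorphic to $L^p[0,1]$, after which the $L^p$-sum structure of $\mathcal{B}$ immediately yields $\mathcal{B} \cong \ell^p \oplus L^p[0,1]$.

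First I would record that $\mathcal{B}$ is the external $L^p$-sum of $U$ and $V$. Since $\mathcal{B}$ is the $L^p$-sum of $\{\mathcal{B}_e\}_{e \in \N}$ and the subspaces $\{\iota_e(\mathcal{B}_e)\}_e$ are mutually disjointly supported, regrouping the summable $p$-series over $e \in \Fin$ and over $e \notin \Fin$ shows that every $w \in \mathcal{B}$ decomposes uniquely as $w = u + v$ with $u \in U$, $v \in V$, and $\norm{w}_p^p = \norm{u}_p^p + \norm{v}_p^p$; that is, $\mathcal{B} = U \oplus V$ as an $L^p$-sum. This is the routine part. I would also note at this stage the two facts about the index set that make everything work: both $\Fin$ and its complement $\N \setminus \Fin$ are infinite, since there are infinitely many indices $e$ with $W_e$ finite (e.g. infinitely many indices of the empty set) and infinitely many with $W_e$ infinite (e.g. infinitely many indices of $\N$).

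Next I would handle the two summands. For $U$: when $e \in \Fin$ we have $\mathcal{B}_e = \ell^p_{2^{m_e}}$ with $2^{m_e} \geq 1$, so $U$ is the $L^p$-sum of countably infinitely many nonzero finite-dimensional $\ell^p$-spaces. Since the $\ell^p$-norm of a vector is the $\ell^p$-norm of its full coordinate list, concatenating the coordinate blocks gives an isometric isomorphism of $U$ onto $\ell^p_\kappa$, where $\kappa = \sum_{e \in \Fin} 2^{m_e} = \omega$; hence $U \cong \ell^p$. For $V$: when $e \notin \Fin$ we have $\mathcal{B}_e = L^p[0,1]$, so $V$ is the $L^p$-sum of countably infinitely many copies of $L^p[0,1]$.

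The main obstacle is establishing $V \cong L^p[0,1]$. I would argue this by viewing $V$ as $L^p$ of the disjoint-union measure space $\bigsqcup_{e \notin \Fin} [0,1]$, which is separable, non-atomic, and $\sigma$-finite of positive (infinite) total measure. By the Carath\'eodory classification of separable measure spaces already invoked in Section~\ref{sec:intro} (see \cite{Cembranos.Mendoza.1997}), $L^p$ of every separable non-atomic measure space of positive measure is isometrically isomorphic to $L^p[0,1]$; in particular $V \cong L^p[0,1]$. The points needing a little care are that disjoint-union measure spaces genuinely realize $L^p$-sums, and that the classification does not require finite total measure, which is handled by rescaling isometries of the form $f \mapsto t^{1/p} f(t \, \cdot)$ on intervals. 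Combining $U \cong \ell^p$, $V \cong L^p[0,1]$, and $\mathcal{B} = U \oplus V$ then gives the lemma.
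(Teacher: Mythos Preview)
Your proposal is correct and follows essentially the same route as the paper: decompose $\mathcal{B}$ as the $L^p$-sum $U \oplus V$, identify $U \cong \ell^p$ as an $L^p$-sum of finite-dimensional $\ell^p$ spaces, and identify $V \cong L^p[0,1]$ via the Carath\'eodory classification of separable non-atomic measure spaces. If anything your write-up is slightly more careful than the paper's, since you correctly realize $V$ as $L^p$ of the disjoint-union space $\bigsqcup_{e \notin \Fin}[0,1]$ (the paper speaks of a product measure) and you make explicit that both $\Fin$ and its complement are infinite.
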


\begin{proof}
Note that $\mathcal{B} = U + V$.
	If $e \in \Fin$, then $\mathcal{B}_e$ is a finite-dimensional $L^p$ space.  
	So, $U$ is isometrically isomorphic to $\ell^p$.  
	If $e \not \in \Fin$, then $\mathcal{B}_e = L^p[0,1]$.  
	So, $V$ is the $L^p$-sum of $L^p[0,1]$ with itself $\aleph_0$ times.
	However, this is the same thing as $L^p(\Omega)$ where $\Omega$ is the 
	product of Lebesgue measure on $[0,1]$ with itself $\aleph_0$ times.  
	As discussed in the introduction, this implies that $V$ is isometrically isomorphic to 
	$L^p[0,1]$.  
\end{proof}

It follows from the construction that $\phi$ is a disintegration of $\mathcal{B}$.

\begin{lemma}\label{cl:norm.phi.comp}
$\nu \mapsto \norm{\phi(\nu)}_{\mathcal{B}}$ is computable.
\end{lemma}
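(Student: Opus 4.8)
The plan is to exhibit an explicit closed-form expression for $\norm{\phi(\nu)}_{\mathcal{B}}^p$ that depends on $\nu$ only through data one can read off directly, and in particular that never requires deciding whether $\nu(0) \in \Fin$. The whole point of the construction is that the two defining cases for $\mathcal{B}_e$ and $g_e$ were arranged so that the relevant norms coincide; thus membership in $\Fin$, which is only $\Sigma^0_2$, is never actually queried.

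First I would treat the root. Since the vectors $f_e = \iota_e(g_e)$ lie in distinct summands of the $L^p$-sum, the $L^p$-sum norm decomposes additively, so
\[
\norm{\phi(\emptyset)}_{\mathcal{B}}^p = \norm{\sum_e 2^{-(e+1)} f_e}_{\mathcal{B}}^p = \sum_e 2^{-(e+1)p}\norm{g_e}_{\mathcal{B}_e}^p.
\]
A direct check shows $\norm{g_e}_{\mathcal{B}_e}^p = 1$ in both cases: if $e \in \Fin$ then $\norm{g_e}_p^p = 2^{-m_e}\cdot 2^{m_e} = 1$, and if $e \notin \Fin$ then $\norm{g_e}_p^p = \norm{\chi_{[0,1]}}_p^p = 1$. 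Hence $\norm{\phi(\emptyset)}_{\mathcal{B}}^p = \sum_e 2^{-(e+1)p}$, a computable real (its tail is dominated by a convergent geometric series, so it is approximable to any prescribed precision), and taking $p$-th roots is computable since $p$ is.

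Next I would handle a non-root node $\nu$, writing $e = \nu(0)$. Since $\phi(\nu) = 2^{-(e+1)} f_e \cdot \chi_{I(\nu)}$, we have $\norm{\phi(\nu)}_{\mathcal{B}}^p = 2^{-(e+1)p}\norm{g_e \cdot \chi_{I(\nu)}}_{\mathcal{B}_e}^p$, so it suffices to evaluate $\norm{g_e \cdot \chi_{I(\nu)}}_{\mathcal{B}_e}^p$, which I would do by induction on $|\nu|$ using the recursive definition of $I(\nu)$. When $e \notin \Fin$, $I(\nu)$ is a dyadic subinterval of $[0,1]$ whose length halves at each level, giving $\norm{g_e \cdot \chi_{I(\nu)}}_p^p = b(\nu) - a(\nu) = 2^{-(|\nu|-1)}$. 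When $e \in \Fin$, $I(\nu)$ is a block of $\#I(\nu) = 2^{m_e - |\nu| + 1}$ coordinates, and since each coordinate of $g_e$ equals $2^{-m_e/p}$, one gets $\norm{g_e \cdot \chi_{I(\nu)}}_p^p = 2^{-m_e}\,\#I(\nu) = 2^{-(|\nu|-1)}$. Thus in both cases $\norm{g_e \cdot \chi_{I(\nu)}}_{\mathcal{B}_e}^p = 2^{-(|\nu|-1)}$, whence
\[
\norm{\phi(\nu)}_{\mathcal{B}}^p = 2^{-(e+1)p}\,2^{-(|\nu|-1)}.
\]

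The point I want to emphasize, and the only place any care is needed, is that this formula depends on $\nu$ only through $e = \nu(0)$ and $|\nu|$, both immediately available from $\nu$. The apparent obstacle is that determining $\mathcal{B}_e$, $m_e$, and $g_e$ seems to demand knowing whether $e \in \Fin$, which is undecidable; but because the two cases were engineered to produce identical norms, no such decision is ever required. Consequently $\nu \mapsto \norm{\phi(\nu)}_{\mathcal{B}}$ is computable, uniformly in $\nu$.
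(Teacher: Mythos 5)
Your proof is correct and follows essentially the same route as the paper's: both rest on the observation that the construction was arranged so that $\norm{\phi(\nu)}_p^p$ halves from parent to child regardless of whether $\nu(0)\in\Fin$, so the norm depends only on $\nu(0)$ and $|\nu|$ and no $\Sigma^0_2$ question is ever asked. Your version is merely more explicit in verifying the two cases (and your value $\sum_e 2^{-(e+1)p}$ for the root is the correct one; the paper's terse proof states it as $1$, which holds only for $p=1$, an inessential slip).
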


\begin{proof}
By construction, $\norm{\phi((e))}_{\mathcal{B}} = 2^{-(e+1)}$ for each $e$.  Thus, since $\phi$ is summative, 
$\phi(\emptyset) = 1$.  If $\nu'$ is a child of $\nu$ in $S$, then by construction 
$\norm{\phi(\nu')}_{\mathcal{B}}^p = \frac{1}{2} \norm{\phi(\nu)}_{\mathcal{B}}^p$.  It follows that 
$\nu \mapsto \norm{\phi(\nu)}_{\mathcal{B}}$ is computable.
\end{proof}

It now follows from Lemma \ref{lm:disintsArePresentations} that $\mathcal{B}^\#$ is a computable presentation of $\mathcal{B}$.   

\begin{lemma}\label{lm:decomp}
If $T$ is an isometric isomorphism of $\ell^p \oplus L^p[0,1]$ onto $\mathcal{B}$, then 
	$T[\ell^p \oplus \{\zerovec\}] =U$ and $T[\{\zerovec\} \oplus L^p[0,1]] = V$.
\end{lemma}

\begin{proof}
Suppose $T$ is an isometric isomorphism of $\ell^p \oplus L^p[0,1]$ onto $\mathcal{B}$.  
Let $U' = T[\ell^p \oplus \{\zerovec\}]$, and let $V' = T[\{\zerovec\} \oplus L^p[0,1]]$.  
Thus, $\mathcal{B}$ is the internal direct sum of $U'$ and $V'$.  

Suppose $j \in \N$, and let $T((e_j, \zerovec)) = f + g$ where $f \in U$ and $g \in V$.  
Since $T((e_j, \zerovec))$ is an atom of $\mathcal{B}$, and since there are no atoms in $V$, it follows that 
$g = 0$ and so $T((e_j, \zerovec)) \in U$.  We can then conclude that $U' \subseteq U$.  Conversely, 
suppose $e \in \Fin$ and $h = \iota_e(e_j)$.  Then, $T^{-1}(h)$ is an atom of $\ell^p \oplus L^p[0,1]$ and so $T^{-1}(h) \in \ell^p \oplus \{\zerovec\}$.  It follows that $\mathcal{B}_e \subseteq U'$ and so 
$U \subseteq U'$.  

Since $\mathcal{B}$ is the internal direct sum of $U$ and $V$, it now follows that $V = V'$.
\end{proof}

Let $P = P_V$.   

\begin{lemma}\label{lm:comp.P}
If $P$ is an $X$-computable map from $\mathcal{B}^\#$ into $\mathcal{B}^\#$, then $X$ computes $\Fin$.
\end{lemma}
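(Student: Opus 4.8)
The plan is to use the vectors $\phi((e))$ as ``detectors'' for membership in $\Fin$, exploiting that by construction each such vector lies entirely inside one of the two summands $U$ or $V$ depending on whether $e \in \Fin$. Indeed, $\phi((e)) = 2^{-(e+1)} f_e = 2^{-(e+1)}\iota_e(g_e)$ lies in $\iota_e(\mathcal{B}_e)$, and $\iota_e(\mathcal{B}_e)$ is one of the summands making up $U$ when $e \in \Fin$ and one of those making up $V$ when $e \notin \Fin$. Since $\mathcal{B}$ is the internal direct sum of the disjointly supported subspaces $U$ and $V$, the projection $P = P_V$ annihilates $U$ and fixes $V$ pointwise. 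Hence $P(\phi((e))) = \zerovec$ when $e \in \Fin$, while $P(\phi((e))) = \phi((e))$ when $e \notin \Fin$. Taking norms and invoking Lemma \ref{cl:norm.phi.comp}, we get that $\norm{P(\phi((e)))}_{\mathcal{B}}$ equals $0$ in the first case and equals $\norm{\phi((e))}_{\mathcal{B}} = 2^{-(e+1)}$ in the second.

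The decisive feature is that these two possible values are separated by a gap of size $2^{-(e+1)}$ that is known in advance, so a sufficiently accurate approximation to $\norm{P(\phi((e)))}_{\mathcal{B}}$ suffices to decide which case holds. I would next establish the required effectivity uniformly in $e$. Writing $\mathcal{B}^\# = (\mathcal{B}, \phi h)$, each $\phi((e))$ is one of the generators $R(n) = \phi(h(n))$ of the presentation; since $h$ is a computable surjection onto $S$ and $(e) \in S$ for every $e$, we can search for and find, uniformly in $e$, an index $n$ with $h(n) = (e)$, so that $\phi((e))$ is a computable vector of $\mathcal{B}^\#$ uniformly in $e$. Assuming $P$ is $X$-computable, it follows (via the relativization of Theorem \ref{thm:seq.comp.map}) that $P(\phi((e)))$ is an $X$-computable vector uniformly in $e$, and therefore $\norm{P(\phi((e)))}_{\mathcal{B}}$ is $X$-computable uniformly in $e$.

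The decision procedure then runs as follows: given $e$, use $X$ to compute a rational $q$ with $|\norm{P(\phi((e)))}_{\mathcal{B}} - q| < 2^{-(e+3)}$. If $\norm{P(\phi((e)))}_{\mathcal{B}} = 0$ then $q < 2^{-(e+3)}$, whereas if $\norm{P(\phi((e)))}_{\mathcal{B}} = 2^{-(e+1)}$ then $q > 2^{-(e+1)} - 2^{-(e+3)} = 3\cdot 2^{-(e+3)}$. Since $2^{-(e+2)}$ lies strictly between $2^{-(e+3)}$ and $3\cdot 2^{-(e+3)}$, comparing $q$ to $2^{-(e+2)}$ correctly decides whether $e \in \Fin$. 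This furnishes an $X$-computable characteristic function for $\Fin$, as desired.

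Conceptually the argument is short; the one point demanding care is correctly reading off from the construction that $\phi((e))$ lies wholly within the correct summand, i.e.\ that $f_e \in \iota_e(\mathcal{B}_e)$ with $\iota_e(\mathcal{B}_e)$ contributing to $U$ exactly when $e \in \Fin$, together with the fact that $U$ and $V$ are disjointly supported and form an internal direct sum decomposition of $\mathcal{B}$, so that $P_V$ genuinely kills $U$. Given the construction these facts are immediate, so the only quantitative work is the gap-and-accuracy bookkeeping in the final step. I expect no serious obstacle beyond ensuring all approximations and index searches are carried out uniformly in $e$.
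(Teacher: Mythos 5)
Your proposal is correct and follows essentially the same route as the paper: both identify that $P(\phi((e)))$ is $\zerovec$ when $e \in \Fin$ and is $\phi((e))$ (of norm $2^{-(e+1)}$) otherwise, and both decide membership by an $X$-computable approximation of $\norm{P(\phi((e)))}$ to within $2^{-(e+3)}$ compared against the threshold $2^{-(e+2)}$. The only cosmetic difference is that the paper phrases the test in terms of $\norm{\cdot}_p^p$ while you use $\norm{\cdot}_p$ directly; the gap argument is identical.
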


\begin{proof}
	Suppose $X$ computes $P$ from $\mathcal{B}^\#$ into $\mathcal{B}^\#$.  If $\nu=(e)$, note that
	\[
	P(\phi(\nu)) = \left\{\begin{array}{ll}
	2^{-(e+1)}f_e & \mbox{\ if $e\not \in \Fin$;}\\
	\zerovec & \mbox{\ otherwise.}\\
	\end{array}
	\right.
	\]
	and
	\[
	\norm{P(\phi(\nu))}^p_\B = \left\{\begin{array}{ll}
	2^{-(e+1)} & \mbox{\ if $e \not \in \Fin$;}\\
	0 & \mbox{\ otherwise.}\\
	\end{array}
	\right.
	\]
	Given $e \in \N$, we can compute with oracle $X$ a rational number $q$ so that 
	$| \norm{P(\phi((e)))}_{\mathcal{B}}^p - q| < 2^{-(e+3)}$.  
	If $|q| < 2^{-(e+2)}$, then $\norm{P(\phi((e)))}_{\mathcal{B}}^p < 2^{-(e+1)}$ and so $e \in \Fin$.  Otherwise, $\norm{P(\phi(\nu))}_{\mathcal{B}}^p \neq 0$ and so $e \not \in \Fin$.
\end{proof}

Theorem \ref{thm:lpLp01.lower} now follows from Proposition \ref{prop:proj.comp}.  Note that we have also demonstrated the sharpness of the bounds in Theorem \ref{thm:proj.Lp.comp}.  

\section{Conclusion}\label{sec:conclusion}

Suppose $p \geq 1$ is a computable real with $p \neq 2$.  We have now classified the computably categorical $L^p$ spaces and determined the degrees of categoricity of those that are not computably categorical.  Our results relate the degree of categoricity of an $L^p$ space to the structure of the underlying measure space.  We have also determined the complexity of the natural projection operators on these spaces as well as their relationship to the degrees of categoricity.  In addition, we have provided the first example of a $\emptyset''$-categorical Banach space that is not $\emptyset'$-categorical.  This result leads to the following.

\begin{question}
If $n \in \N$ and $n \geq 2$, is there is a $\emptyset^{(n+1)}$-categorical Banach space that is not $\emptyset^{(n)}$-categorical?
\end{question}

We note that Melnikov and Nies have shown that each compact computable metric space is $\emptyset''$-categorical and that there is a compact computable Polish space that is not $\emptyset'$-categorical \cite{Melnikov.Nies.2013}.  

 We have shown that the degrees of $\emptyset$, $\emptyset'$, and $\emptyset''$ are degrees of categoricity of Banach spaces.  These results lead to the following.
  
\begin{question}
Is every hyperarithmetical degree the degree of categoricity of a Banach space?
\end{question}

\begin{question}
Is there a Banach space that does not have a degree of categoricity?
\end{question}

\section*{Acknowledgement}

We thank Diego Rojas for proofreading and several valuable suggestions.  

%\bibliographystyle{amsplain}
%\bibliography{/Users/mcnichol/Box/research/bibliographies/computability,/Users/mcnichol/Box/research/bibliographies/analysis}
%\bibliography{/Users/tim/Box/research/bibliographies/computability,/Users/tim/Box/research/bibliographies/analysis}
\def\cprime{$'$}
\providecommand{\bysame}{\leavevmode\hbox to3em{\hrulefill}\thinspace}
\providecommand{\MR}{\relax\ifhmode\unskip\space\fi MR }
% \MRhref is called by the amsart/book/proc definition of \MR.
\providecommand{\MRhref}[2]{%
  \href{http://www.ams.org/mathscinet-getitem?mr=#1}{#2}
}
\providecommand{\href}[2]{#2}

\end{document}